\numberwithin{equation}{section}
\newtheorem{theorem}{Theorem}[section]
\newtheorem{lemma}[theorem]{Lemma}\newtheorem{proposition}[theorem]{Proposition}\newtheorem{remark}[theorem]{Remark}\newenvironment{proof}[1][Proof]{\textbf{#1.} }{\ \rule{0.5em}{0.5em}}
\begin{document}

\title{Parabolic equations with singular divergence-free \\ drift vector
fields}

\author{Zhongmin Qian\thanks{Research supported partly by an ERC grant. Mathematical Institute,
University of Oxford, OX2 6GG, England. Email: qianz@maths.ox.ac.uk} \ and\ Guangyu Xi\thanks{Research supported partly by Doctoral Training Center, EPSRC. Mathematical
Institute, University of Oxford, OX2 6GG, England. Email: guangyu.xi@maths.ox.ac.uk}}
\maketitle
\begin{abstract}
In this paper, we study an elliptic operator in divergence-form but
not necessarily symmetric. In particular, our results can be applied
to elliptic operator $L=\nu\Delta+u(x,t)\cdot\nabla$, where $u(\cdot,t)$
is a time-dependent vector field in $\mathbb{R}^{n}$, which is divergence-free
in distribution sense, i.e. $\nabla\cdot u=0$. Suppose $u\in L_{t}^{\infty}(\textrm{BMO}_{x}^{-1})$.
We show the existence of the fundamental solution $\varGamma(x,t;\xi,\tau)$
of the parabolic operator $L-\partial_{t}$, and show that $\varGamma$
satisfies the Aronson estimate with a constant depending only on the
dimension $n$, the elliptic constant $\nu$ and the norm $\left\Vert u\right\Vert _{L^{\infty}(\textrm{BMO}^{-1})}$.
Therefore the existence and uniqueness of the parabolic equation $\left(L-\partial_{t}\right)v=0$
are established for initial data in $L^{2}$-space, and their regularity
is obtained too. In fact, we establish these results for a general
non-symmetric elliptic operator in divergence form. 
\end{abstract}

\section{Introduction}

The analysis of the Navier-Stokes equations, which are non-linear
partial differential equations describing the motion of incompressible
fluids confined in certain spaces, has inspired the large portion
of the mathematical analysis of non-linear partial differential equations
(see for example \cite{Ladyzenskaja-book-1968,Lemarie-Rieusset-book-2016,Morrey-1966,Stein-book-1970}
and etc.) due to the fundamental work J. Leray \cite{Leray-1934}.
The Navier-Stokes equations are partial differential equations of
second-order 
\begin{equation}
\frac{\partial}{\partial t}u+u\cdot\nabla u=\nu\Delta u-\nabla p,\label{eq:ns01}
\end{equation}
\begin{equation}
\nabla\cdot u=0,\label{eq:ns02}
\end{equation}
subject to the no-slip boundary condition if the domain of fluid is
finite, where $u=(u^{1},u^{2},u^{3})$ is the velocity vector field
of the fluid flow, $p(x,t)$ is the pressure at the instance $t$
and location $x$. J. Leray \cite{Leray-1934} demonstrated the existence
of a weak solution $u$ which belongs to the space $L^{\infty}\left([0,\infty),L^{2}(\mathbb{R}^{n})\right)$
and also to the space $L^{2}\left([0,\infty),H^{1}(\mathbb{R}^{n})\right)$.
The vorticity $\omega$ exists in $L^{2}$ space and formally, by
differentiating the Navier-Stokes equations, solves the vorticity
equation 
\begin{equation}
\frac{\partial}{\partial t}\omega+u\cdot\nabla\omega=\nu\Delta\omega+\omega\cdot\nabla u\label{eq:vor1}
\end{equation}
where the velocity $u$ and the vorticity $\omega$, which is too
a time dependent vector field $\omega=(\omega^{1},\omega^{2},\omega^{3})$,
are related by the definition that $\omega=\nabla\times u$. The resolution
of the Navier-Stokes equations remains to be an open mathematical
problem (see \cite{Lemarie-Rieusset-book-2016,von Wahl-book-1985}
for example), the current research has thus concentrated on the understanding
of the related partial differential equations and on developing numerical
approaches.

Observe that both the Navier-Stokes equations and the vorticity equations
may be put into the following form 
\begin{equation}
\left(\frac{\partial}{\partial t}-\nu\Delta+u\cdot\nabla\right)u=-\nabla p\label{eq:ns-11}
\end{equation}
and 
\begin{equation}
\left(\frac{\partial}{\partial t}-\nu\Delta+u\cdot\nabla\right)\omega=\omega\cdot\nabla u\label{eq:vot2}
\end{equation}
where the diffusion part is the same and is defined by the parabolic
operator 
\begin{equation}
L=\frac{\partial}{\partial t}-\nu\Delta+u\cdot\nabla.\label{eq:heat-01}
\end{equation}
The elliptic operator $\nu\Delta-u\cdot\nabla$ is the generator of
the so-called Taylor diffusion (see J. T. Taylor \cite{Taylor-1921,Taylor-1935})
of the flow of fluids. There are two non-linear terms appearing in
the Navier-Stokes equations and the vorticity equations, which determine
the turbulent nature of flows of fluids (see for example \cite{Monin-Yaglom-Vol1,Monin-Yaglom-Vol2}).
The parabolic operator $L$ has the capability of covering the so-called
non-linear convection mechanism \textendash{} the rate-of-strain (for
the Navier-Stokes equations \cite{von Wahl-book-1985,Libeberman-book--1996})
and the vorticity (in the case of the vorticity equations) can be
amplified even more rapidly by an increase of the velocity. It is
therefore important to study the parabolic equations associated with
the elliptic operator $A=\nu\Delta-u\cdot\nabla$, where $u$ is a
weak solution of the Navier-Stokes equations. The main feature here
is that $u(x,t)$ is a time-dependent vector field with little regularity,
which however is solenoidal, that is, for every $t$, $\nabla\cdot u(\cdot,t)=0$
in distribution sense, so that the formal adjoint $A^{*}=\nu\Delta+u\cdot\nabla$
is also a diffusion generator. These special features have significance,
and have been explored in several recent articles \cite{Hofmann-Mayboroda-2007,Osada-1987,Seregin-2012}
etc. for example. In this paper we give a through study for a class
of such parabolic equations.

Recall that in dimension three, a vector field $u=\left(u^{i}\right)$
is divergence-free, i.e. $\nabla\cdot u=0$, implies that its corresponding
two form (with respect to the Hodge star operation) $\star u$ is
closed, that is $d\star u=0$. In fact the divergence operator $\nabla\cdot$
coincides with the Hodge dual $\star d\star$ up to a sign, where
$d$ is the exterior differentiation. Therefore, according to the
Poincaré lemma, $\star u$ is exact, that is, there is a vector field
$b=\left(b^{i}\right)$ so that $\star u=db$. Hence $u$ coincides
with $\star db$ up to a sign. $\star b$ is a two form with components
$b^{ij}=\sum_{k}\varepsilon^{ijk}b^{k}$, where $\varepsilon^{ijk}$
is the usual Kronercker symbols of three elements. $\left(b^{ij}\right)$
is skew-symmetric, and 
\[
u\cdot\nabla=\sum_{i,j=1}^{3}\frac{\partial b^{ij}}{\partial x^{i}}\frac{\partial}{\partial x^{j}}.
\]
The elliptic operator $\nu\Delta-u\cdot\nabla$ can thus be put into
a divergence form 
\[
\sum_{i,j}\frac{\partial}{\partial x^{i}}\left(\nu\delta^{ij}-b^{ij}\right)\frac{\partial}{\partial x^{j}}\equiv\sum_{i,j}\frac{\partial}{\partial x^{i}}\left(A^{ij}\frac{\partial}{\partial x^{j}}\right)
\]
where $A=\left(A^{ij}\right)$ is not necessarily symmetric. The symmetric
part $\left(\nu\delta^{ij}\right)$ is uniformly elliptic, and the
skew-symmetric part $\left(b^{ji}\right)$ determines the divergence-free
drift vector field $u$.

In the present paper we develop a theory for the linear parabolic
equation 
\[
\sum_{i,j}\frac{\partial}{\partial x^{i}}\left(A^{ij}\frac{\partial u}{\partial x^{j}}\right)-\frac{\partial}{\partial t}u=0
\]
under very weak assumptions that $\left(A^{ij}\right)$ is uniformly
elliptic and its anti-symmetric part only belongs to the BMO space.

The paper is organized as following. In Section 2, we describe the
main result, that is the Aronson estimate which depends only on the
elliptic constant and the BMO norm of the anti-symmetric part of $A$,
which is the key tool of studying weak solutions. In Section 3, we
provide several results which will be used to prove the Aronson estimate
in our setting. These results are interesting by their own, including
several versions of the compensated compactness, and a density result
of the BMO space which seems new. In Section 4, we give the details
of the proof of the Aronson estimate, and in the final Section 5,
we study the weak solutions to the linear parabolic equations in divergence
form (but not necessarily symmetric) under weak assumptions. In particular,
we prove the existence and uniqueness of weak solutions to the parabolic
equation associated with a non-symmetric diffusion matrix $A=\left(A^{ij}\right)$.

\section{Aronson's estimate for non-symmetric parabolic equations}

Let us begin with the description of our framework. We consider the
following type of linear parabolic equations of second order 
\begin{equation}
\frac{\partial}{\partial t}u(x,t)-\left[A_{t}u\right](x,t)=0\quad\textrm{ on }\mathbb{R}^{n}\times[0,\infty)\label{eq:eq2-1}
\end{equation}
where 
\begin{equation}
A_{t}=\sum_{i,j=1}^{n}\frac{\partial}{\partial x^{i}}\left(A^{ij}(\cdot,t)\frac{\partial}{\partial x^{j}}\right)\label{eq:div-o1}
\end{equation}
is in divergence form but\emph{ not necessarily symmetric}, and their
associated diffusion processes in terms of fundamental solutions defined
by (\ref{eq:eq2-1}). There is a unique decomposition $A(x,t)=a(x,t)+b(x,t)$
such that $a(x,t)=\left(a^{ij}(x,t)\right)$ is symmetric, while $b(x,t)=\left(b^{ij}(x,t)\right)$
is skew-symmetric. We assume that $A$ is uniformly elliptic in the
following sense: there exists a constant $\lambda>0$ such that 
\begin{equation}
\lambda\vert\xi\vert^{2}\leq\sum_{i,j=1}^{n}a^{ij}(x,t)\xi_{i}\xi_{j}=\sum_{i,j=1}^{n}A^{ij}(x,t)\xi_{i}\xi_{j}\leq\frac{1}{\lambda}\vert\xi\vert^{2}\label{eq: uniform ellipticity}
\end{equation}
for any $\xi=\left(\xi_{i}\right)\in\mathbb{R}^{n}$, $x\in\mathbb{R}^{n}$
and $t\geq0$.

Let us first consider the regular case where $A^{ij}$ are smooth,
bounded and possess bounded derivatives of all orders on $\mathbb{R}^{n}\times[0,\infty)$.

Let $L=A_{t}-\frac{\partial}{\partial t}$ be the parabolic linear
operator associated with $\left(A^{ij}(x,t)\right)$. The formal adjoint
of $L$ is again a parabolic operator (with vanished zero order term)
given by 
\begin{equation}
L^{\star}=\sum_{i,j=1}^{n}\frac{\partial}{\partial x^{i}}\left(A^{ji}(\cdot,t)\frac{\partial}{\partial x^{j}}\right)+\frac{\partial}{\partial t}.\label{eq:f-ad0.1}
\end{equation}
where $A^{ji}=a^{ij}-b^{ij}$ is the transpose of $\left(A^{ij}\right)$.
It is known that (see A. Friedman \cite{Friedman-1964}, Theorem 11
and 12, Chapter 1), under the elliptic condition and smoothness assumptions
on $A^{ij}(x,t)$, there is a unique positive \emph{fundamental solution}
$\varGamma(x,t;\xi,\tau)$ of the parabolic operator $L$, and it
is smooth in $(x,t,\xi,\tau)$ on $0\leq\tau<t<\infty$ and $(x,\xi)\in\mathbb{R}^{n}\times\mathbb{R}^{n}$.
Recall that the following properties are satisfied.

1) $\varGamma(x,t;\xi,\tau)>0$ for any $0\leq\tau<t$ and $x,\xi\in\mathbb{R}^{n}$.

2) For every $\xi\in\mathbb{R}^{n}$ and $\tau\in[0,\infty)$, as
a function of $(x,t)\in\mathbb{R}^{n}\times(\tau,\infty)$, $u(x,t)\equiv\varGamma(x,t;\xi,\tau)$
solves the parabolic equation $Lu=0$ on $(\tau,\infty)\times\mathbb{R}^{n}$:
\begin{equation}
\sum_{i,j=1}^{n}\frac{\partial}{\partial x^{i}}\left(A^{ij}(x,t)\frac{\partial}{\partial x^{j}}\varGamma(x,t;\xi,\tau)\right)-\frac{\partial}{\partial t}\varGamma(x,t;\xi,\tau)=0\qquad\textrm{ on }\mathbb{R}^{n}\times(\tau,\infty).\label{eq:ga0.1}
\end{equation}

3) Chapman-Kolmogorov's equation holds 
\begin{equation}
\varGamma(x,t;\xi,\tau)=\int_{\mathbb{R}^{n}}\varGamma(x,t;z,s)\varGamma(z,s;\xi,\tau)dz.\label{eq:CK-01}
\end{equation}

4) For any bounded continuous function $f$ and $\tau\in[0,\infty)$,
it holds that 
\begin{equation}
\lim_{t\downarrow\tau}\int_{\mathbb{R}^{n}}f(\xi)\varGamma(x,t;\xi,\tau)d\xi=f(x)\label{eq:in-01}
\end{equation}
for every $x\in\mathbb{R}^{n}$.

For $0\leq\tau<t$, let $\varGamma_{\tau,t}$ denote the corresponding
linear operator defined by 
\begin{equation}
\varGamma_{\tau,t}f(x)=\int_{\mathbb{R}^{n}}f(\xi)\varGamma(x,t;\xi,\tau)d\xi\label{eq:ga02}
\end{equation}
where $f$ is Borel measurable, either non-negative, or/and bounded.
By (\ref{eq:CK-01}) 
\begin{equation}
\varGamma_{s,t}\circ\varGamma_{\tau,s}=\varGamma_{\tau,t}\label{eq:CK-02}
\end{equation}
for any $0\leq\tau<s<t$.

Define $\varGamma^{\star}(x,s;y,t)=\varGamma(y,t;x,s)$ for $t>s\geq0$.
Then $\varGamma^{\star}$ is the fundamental solution to $L^{\star}v=0$
in the sense that for every fixed $(y,t)$, as a function of $(x,s)$,
$\varGamma^{\star}$ solves the \emph{backward} parabolic equation
\begin{equation}
\sum_{i,j=1}^{n}\frac{\partial}{\partial x^{i}}\left(A^{ji}(x,s)\frac{\partial}{\partial x^{j}}\varGamma^{\star}(x,s;y,t)\right)+\frac{\partial}{\partial s}\varGamma^{\star}(x,s;y,t)=0\label{eq:adjoint-bas01}
\end{equation}
on $(x,s)\in\mathbb{R}^{n}\times[0,t)$. It follows that the fundamental
solution $\varGamma$ also solves the backward parabolic equation:
\begin{equation}
\sum_{i,j=1}^{n}\frac{\partial}{\partial\xi^{i}}\left(A^{ji}(\xi,\tau)\frac{\partial}{\partial\xi^{j}}\varGamma(x,t;\xi,\tau)\right)+\frac{\partial}{\partial\tau}\varGamma(x,t;\xi,\tau)=0\label{eq:bas02}
\end{equation}
which holds for any $\xi,x\in\mathbb{R}^{n}$ and $0<\tau<t$.

We are now in a position to state the key result of the present paper.

\begin{theorem}\label{-main-th1}There is a constant $M>0$ depending
only on the dimension $n$, the elliptic constant $\lambda>0$, and
the $L^{\infty}([0,\infty),\textrm{BMO})$ norm of the skew-symmetric
part $b^{ij}=\frac{1}{2}\left(A^{ij}-A^{ji}\right)$ such that 
\begin{equation}
\frac{1}{M(t-\tau)^{n/2}}\exp\left[-\frac{M|x-\xi|^{2}}{t-\tau}\right]\leq\varGamma(x,t;\xi,\tau)\leq\frac{M}{(t-\tau)^{n/2}}\exp\left[-\frac{|x-\xi|^{2}}{M(t-\tau)}\right]\label{eq:aron1}
\end{equation}
for any $0\leq\tau<t<\infty$ and $(x,\xi)\in\mathbb{R}^{n}\times\mathbb{R}^{n}$,
where the $L^{\infty}([0,\infty),\textrm{BMO})$ norm of $b^{ij}$
is defined by 
\[
\left\Vert b\right\Vert _{L^{\infty}\left(\textrm{BMO}\right)}=\sup_{t\geq0}\sqrt{\sum_{i<j}\left\Vert b^{ij}(\cdot,t)\right\Vert _{\textrm{BMO}}^{2}}.
\]
\end{theorem}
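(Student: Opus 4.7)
The strategy is Aronson's classical scheme---Moser/Nash for the on-diagonal upper bound, Davies' exponential perturbation for off-diagonal Gaussian decay, and the parabolic Harnack inequality plus chaining for the lower bound---adapted to the setting in which the antisymmetric part $b^{ij}$ lies only in $\mathrm{BMO}$. The decisive observation is that $b^{ij}$ drops out \emph{pointwise} in every energy identity whose test function depends on $u$ through a scalar function, since $b^{ij}\partial_iu\partial_ju\equiv 0$ by antisymmetry in $(i,j)$. The $\mathrm{BMO}$ norm of $b^{ij}$ therefore enters the analysis only where this cancellation is broken---through the linear term produced by Davies' exponential weight, and later through cutoff functions in Moser's Harnack argument---and is controlled there by the compensated-compactness and $\mathrm{BMO}$--$H^{1}$ duality results assembled in Section~3.

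For the on-diagonal upper bound, testing (\ref{eq:eq2-1}) against $u$ gives $\tfrac12\partial_t\|u\|_2^2=-\int a^{ij}\partial_iu\partial_ju\,dx\le-\lambda\|\nabla u\|_2^2$ (the antisymmetric contribution vanishing pointwise), while integration of the equation gives $\|u\|_1=\|u_0\|_1$ for nonnegative $u$. Nash's inequality applied to this pair produces ultracontractivity $\|\Gamma_{\tau,t}\|_{L^1\to L^\infty}\le C(t-\tau)^{-n/2}$ with $C=C(n,\lambda)$, independent of $b$.

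For the off-diagonal Gaussian upper bound, fix $\alpha\in\mathbb{R}^n$ and set $v_\alpha=e^{-\alpha\cdot x}\Gamma_{\tau,t}f$. A direct calculation yields the twisted energy identity
\[
\tfrac12\partial_t\|v_\alpha\|_2^2 \;=\; -\int a^{ij}\partial_iv_\alpha\,\partial_jv_\alpha\,dx \;+\; \int(\alpha\cdot a\alpha)\,v_\alpha^2\,dx \;+\; I, \qquad I:=\int\alpha_ib^{ij}\,\partial_j(v_\alpha^2)\,dx.
\]
The term $I$ does not vanish pointwise---this is the heart of the proof. Writing $\mathbf{B}^j:=\alpha_ib^{ij}$, for which $\|\mathbf{B}\|_{\mathrm{BMO}}\lesssim|\alpha|\,\|b\|_{L^\infty(\mathrm{BMO})}$, the compensated-compactness estimate of Section~3 (in essence a Helmholtz decomposition together with a CLMS-type pairing of $\mathrm{BMO}$ against the curl-free quantity $\nabla(v_\alpha^2)=2v_\alpha\nabla v_\alpha$) yields
\[
|I| \;\le\; C\,|\alpha|\,\|b\|_{L^\infty(\mathrm{BMO})}\,\|v_\alpha\|_2\,\|\nabla v_\alpha\|_2.
\]
Young's inequality absorbs the gradient factor into the leading coercive term, giving $\partial_t\|v_\alpha\|_2^2\le C|\alpha|^2\|v_\alpha\|_2^2$, so that the twisted semigroup obeys $\|e^{-\alpha\cdot x}\Gamma_{\tau,t}e^{\alpha\cdot\xi}\|_{L^2\to L^2}\le e^{C|\alpha|^2(t-\tau)}$. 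Running the Nash argument of the previous paragraph for this twisted semigroup upgrades it to the $L^1\to L^\infty$ bound $C(t-\tau)^{-n/2}e^{C|\alpha|^2(t-\tau)}$, and optimizing over $\alpha$ gives the Gaussian upper bound in (\ref{eq:aron1}).

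The lower bound follows the Moser--Nash route: apply Moser iteration to $\log\Gamma$ on positive solutions---where the antisymmetric $b$-quadratic form once again cancels pointwise in the leading energy, while the cutoff-induced cross terms are controlled by the Section~3 $\mathrm{BMO}$ estimates---to obtain a scale-invariant parabolic Harnack inequality with constants depending only on $n,\lambda,\|b\|_{L^\infty(\mathrm{BMO})}$, and then chain along a sequence of parabolic cylinders connecting $(x,t)$ to $(\xi,\tau)$ as in the classical argument. The principal obstacle throughout is the estimate on $|I|$ above: a naive Cauchy--Schwarz requires $b\in L^\infty$, and the $\mathrm{BMO}$ replacement rests entirely on the compensated-compactness machinery together with the new $\mathrm{BMO}$ density result developed in Section~3, which must be invoked in the correct form at each point where the pointwise antisymmetric cancellation fails.
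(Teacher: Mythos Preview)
Your upper-bound argument is essentially the paper's: Davies' exponential twist $v_\alpha=e^{-\alpha\cdot x}\Gamma_{\tau,t}f$, the cancellation $b^{ij}\partial_i v\,\partial_j v=0$ in the leading quadratic form, and control of the surviving cross-term $I=\int\alpha_i b^{ij}\partial_j(v_\alpha^2)\,dx$ by the Hardy--BMO estimate of Section~3 (this is exactly the paper's term $I_4$, bounded via Proposition~3.2). One point you gloss over: to pass from the twisted $L^2\to L^2$ bound to the twisted $L^1\to L^\infty$ bound you need the $L^{2p}$ energy inequality for the twisted semigroup for \emph{all} $p\ge 1$, not just $p=1$; the paper writes this out (the $I_4$ term reappears at every $p$ and is again controlled by Proposition~3.2), and then runs the Moser iteration of Lemma~4.1. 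Your ``run the Nash argument again'' is correct in spirit but hides this step.

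For the lower bound you take a genuinely different route. The paper does \emph{not} prove a Moser-type local Harnack inequality with cutoffs and then chain; in fact it derives Harnack (Theorem~2.3) only afterwards, as a corollary of the two-sided Aronson bound. Instead the paper follows Nash's original idea in the Fabes--Stroock formulation: set
\[
G(t)=\int_{\mathbb{R}^n}\ln\Gamma(x,1;\xi,1-t)\,\mu(d\xi),
\]
with $\mu$ the standard Gaussian, differentiate using the backward equation, and derive the scalar ODE $G'(t)\ge -C_1+C_2G(t)^2$. The Gaussian weight replaces any spatial cutoff, so the only place $b$ enters is the single cross-term $\int\langle\nabla\mu,\,b\cdot\nabla\ln\Gamma\rangle\,d\xi$, which the paper rewrites (using antisymmetry) as $\delta^{-1}\int\langle\nabla\mu^\delta,\,b\cdot\nabla(\mu^{1-\delta}\ln\Gamma)\rangle\,d\xi$ and bounds by the global compensated-compactness inequality~(3.5). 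Your Moser--Harnack route is legitimate (it is essentially the Seregin--Silvestre--\v{S}ver\'ak--Zlato\v{s} approach) but requires local $\mathrm{BMO}$ estimates for the cutoff-induced cross terms, which the tools in Section~3 do not directly provide in localized form; the paper's Nash $G$-function argument sidesteps this by staying global throughout.
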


The fundamental heat kernel estimate (\ref{eq:aron1}) for parabolic
equations has a long history. Two side estimate (\ref{eq:aron1})
was first established in D. G. Aronson \cite{Aronson-1967,Aronson-1965}
for uniformly elliptic operators in divergence form where $A^{ij}$
is symmetric (so that $b^{ij}\equiv0$), his constant $M$ depends
only on the elliptic constant $\lambda$ and the dimension $n$. The
estimate (\ref{eq:aron1}) is therefore referred to as the Aronson
estimate. A weaker but global estimate similar to (\ref{eq:aron1})
under the same assumption as in D. G. Aronson \cite{Aronson-1967}
already appeared in the Appendix of J. Nash \cite{Nash-1958}. D.
G. Aronson \cite{Aronson-1967,Aronson-1968} indicated that his estimate
can be established for a general elliptic operator, and a written
proof is available in E. B. Fabes and D. W. Stroock \cite{Fabes-Stroock-1986},
D. W. Stroock \cite{Stroock-1988} and J. R. Norris and D. W. Stroock
\cite{Norris-Stroock-1991} too. In these papers, the Aronson estimate
(\ref{eq:aron1}) was established for the following type of uniformly
elliptic operator 
\[
\sum_{i,j=1}^{n}\frac{\partial}{\partial x^{i}}a^{ij}(x,t)\frac{\partial}{\partial x^{j}}+\sum_{i,j=1}^{n}a^{ij}(x,t)b_{j}(x,t)\frac{\partial}{\partial x^{i}}-\frac{\partial}{\partial x^{i}}\left(a^{ij}(x,t)\hat{b}_{j}(x,t)\right)+c(x,t)
\]
where $\left(a^{ij}\right)$ is symmetric and uniformly elliptic.
For this case, their constant $M$ depends on the dimension, the elliptic
constant $\lambda$ and the $L_{tx}^{\infty}$-norms of $b$, $\hat{b}$
and $c$.

A related topic to the Aronson estimate is the regularity of solutions
to the parabolic equation $Lu=0$ (see for a complete survey of classical
results \cite{Ladyzenskaja-book-1968}). If the elliptic operator
is symmetric and is in divergence form, it was J. Nash \cite{Nash-1958}
who proved the Hölder continuity of bounded solutions and also proved
that the Hölder exponent depends only on the dimension and the elliptic
constant $\lambda$. Under the same setting as that of J. Nash \cite{Nash-1958},
in 1964, J. Moser \cite{Moser-1964a} established the Harnack inequality
for positive solutions of the parabolic equation $Lu=0$, based on
which G. Aronson was able to derive his estimate (\ref{eq:aron1}).
E. B. Fabes and D. W. Stroock \cite{Fabes-Stroock-1986} showed that
J. Moser's Harnack inequality can be derived from Aronson estimate
together with J. Nash's idea, and D. W. Stroock \cite{Stroock-1988}
further demonstrated that both the Hölder continuity of classical
solutions and Moser's Harnack inequality for positive solutions can
be established by utilizing the two side Aronson estimate (\ref{eq:aron1}).
J. Nash's idea in \cite{Nash-1958} and the techniques in J. Moser
\cite{Moser-1964a,Moser-1964b,Moser-1971}\emph{ }have been investigated
intensively during the past decades. Many excellent results have been
obtained in more general settings, but mainly under the symmetric
setting of Dirichlet forms \cite{Fukushima-1980}. See for example
A. A. Grigor'yan \cite{Grigoryan-1991}, E. B. Davies \cite{Davies-book-1989}
and D. W. Stroock \cite{Stroock-book-2008} for a small sample of
references, and see also the literature therein.

The case that $\left(A^{ij}\right)$ is non-symmetric has received
intensive study only recently, due to the connection with the Navier-Stokes
equations and the blow-up behavior of their solutions. In H. Osada
\cite{Osada-1987}, the Aronson estimate (\ref{eq:aron1}) was obtained
for an elliptic operator in divergence form as ours, where $\left(A^{ij}\right)$
may not be symmetric, his constant $M$ in (\ref{eq:aron1}) however
depends on the dimension $n$, the elliptic constant $\lambda$ and
the $L_{tx}^{\infty}$-norm of the skew-symmetric part $(b^{ij})$.
In a recent work by G. Seregin, L. Silvestre, V. Šverák, and A. Zlatoš
\cite{Seregin-2012}, who noticed that a large portion of Nash's arguments
also work for an elliptic operator with divergence-free drifts, i.e.
where the elliptic operator has a form $\Delta+u(x,t)\cdot\nabla$
such that $\nabla\cdot u(\cdot,t)=0$. In particular, they mentioned
that the fundamental solution $\varGamma$ of the heat operator 
\[
\Delta+u(x,t)\cdot\nabla-\frac{\partial}{\partial t}
\]
satisfies the diagonal decay estimate 
\[
\varGamma(x,t;x,\tau)\leq\frac{C}{(t-\tau)^{\frac{n}{2}}}
\]
for all $t>\tau\geq0$. They further proved the Harnack inequality
in this case, and their constants depend on the dimension and the
$L_{t}^{\infty}\text{BMO}_{x}^{-1}$ of the vector field.

Our work was motivated by the observation made by G. Seregin and etc.
\cite{Seregin-2012}, and the approach put forward by E. Davies \cite{Davies-1987},
E. B. Fabes and D. Stroock \cite{Fabes-Stroock-1986}, and D. W. Stroock
\cite{Stroock-1988}. We follow the approach in E. B. Davies and D.
W. Stroock to the non-symmetric case, and adopt their arguments to
our case by overcoming the difficulties arising from the singularities
of the skew-symmetric part $(b^{ij})$. In a sense, the present work
is to complete the program initiated by G. Seregin and etc. \cite{Seregin-2012}
by bringing in the techniques developed over years by various authors.

As in D. W. Stroock \cite{Stroock-1988}, as consequences of the Aronson
estimate, we have the following continuity theorem and the Harnack
inequality.

\begin{theorem}\label{cor: continuity of transition probability}
There exist $C>0$ and $\alpha\in(0,1)$ depending only on the dimension
$n$, the elliptic constant $\lambda$ and the $L_{t}^{\infty}\textrm{BMO}_{x}$-norm
of the skew-symmetric part $(b^{ij})$, such that for every $\delta>0$
\begin{equation}
\vert\varGamma(x,t;\xi,\tau)-\varGamma(x',t';\xi',\tau)\vert\leq\frac{C}{\delta^{n}}(\vert t'-t\vert\vee\vert x'-x\vert\vee\vert\xi'-\xi\vert)^{\alpha}
\end{equation}
for all $\tau\geq0$, $(t',x',\xi'),(t,x,\xi)\in[s+\delta^{2})\times\mathbb{R}^{n}\times\mathbb{R}^{n}$
with $\vert t'-t\vert\vee\vert x'-x\vert\vee\vert y'-y\vert\leq\delta$.
\end{theorem}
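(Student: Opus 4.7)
The plan is to derive the H\"older continuity of $\varGamma$ from the two-sided Aronson bound \eqref{eq:aron1} of Theorem~\ref{-main-th1}, following the strategy developed by Stroock in the symmetric case. The argument splits into two ingredients: a H\"older estimate for bounded solutions of $Lu=0$ in a parabolic cylinder, and a duality step that transfers this estimate to the $\xi$-variable.

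The first step is an oscillation lemma: if $v$ is a bounded solution of $Lv=0$ on a parabolic cylinder $Q_R=B(x_0,R)\times(t_0,t_0+R^2]$, then
\[
\mathrm{osc}_{Q_{R/2}}\,v\le\theta\,\mathrm{osc}_{Q_R}\,v
\]
for some $\theta\in(0,1)$ depending only on $n$, $\lambda$ and $\|b\|_{L^\infty(\textrm{BMO})}$. The derivation proceeds from Chapman--Kolmogorov \eqref{eq:CK-01} together with the two-sided bound \eqref{eq:aron1}: writing any non-negative solution as $v(x,t)=\int\varGamma(x,t;\eta,s)v(\eta,s)\,d\eta$ for an intermediate time $s$, the lower Gaussian bound forces $v$ to be bounded below by a fixed positive fraction of the $L^1$-mass it carries on $B(x_0,R/2)$, yielding a weak Harnack principle. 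Applying this alternately to $v-\inf_{Q_R}v$ and $\sup_{Q_R}v-v$ (still solutions by linearity of $L$) produces the oscillation reduction, and iterating on dyadic scales gives a H\"older exponent $\alpha\in(0,1)$ whose modulus depends only on $n$, $\lambda$, $\|b\|_{L^\infty(\textrm{BMO})}$.

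With the oscillation lemma in hand, H\"older continuity in $(x,t)$ at fixed $(\xi,\tau)$ is immediate: the function $u(x,t)=\varGamma(x,t;\xi,\tau)$ solves $Lu=0$ on $(\tau,\infty)\times\mathbb{R}^n$ by \eqref{eq:ga0.1} and is bounded by $M\delta^{-n}$ on $\{t\ge\tau+\delta^{2}\}$ thanks to the upper half of \eqref{eq:aron1}, so the oscillation lemma applied on cylinders of radius of order $\delta$ yields
\[
|\varGamma(x,t;\xi,\tau)-\varGamma(x',t';\xi,\tau)|\le\frac{C}{\delta^{n}}(|t-t'|\vee|x-x'|)^{\alpha}.
\]
For continuity in $\xi$ I would use duality: for fixed $(x,t)$, $w(\xi,s)=\varGamma(x,t;\xi,s)$ satisfies the backward adjoint equation \eqref{eq:bas02}, which after time reversal is a forward divergence-form parabolic operator with coefficient matrix $(A^{ji})$, the same elliptic constant $\lambda$, and the same BMO norm of its skew-symmetric part $-(b^{ij})$. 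The oscillation lemma therefore applies and produces the $\xi$-H\"older estimate of $w$ at fixed $s=\tau$. A triangle inequality splitting
\[
\varGamma(x,t;\xi,\tau)-\varGamma(x',t';\xi',\tau)=[\varGamma(x,t;\xi,\tau)-\varGamma(x',t';\xi,\tau)]+[\varGamma(x',t';\xi,\tau)-\varGamma(x',t';\xi',\tau)]
\]
then combines the two estimates.

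The principal obstacle is the oscillation lemma itself. Classical Nash--De Giorgi--Moser theory relies on energy identities that become delicate for the non-symmetric, BMO-singular operator $L$; the non-trivial point, already exploited in proving Theorem~\ref{-main-th1}, is that the two-sided Aronson estimate can stand in for those energy arguments and allow the oscillation reduction to be derived purely from the integral representation via $\varGamma$. Once this lemma is in place the remainder is routine bookkeeping with scales and triangle inequalities, and the constants $C$ and $\alpha$ inherit precisely the dependence announced in the statement.
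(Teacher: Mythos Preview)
Your proposal is correct and aligns with the paper's approach: the paper does not supply its own proof of this theorem but simply states it as a consequence of the Aronson estimate, referring to Stroock \cite{Stroock-1988}. The argument you sketch---deriving an oscillation-decay lemma purely from the two-sided Gaussian bound and Chapman--Kolmogorov, applying it to $\varGamma$ in the forward variables via \eqref{eq:ga0.1}, to the backward variables via the adjoint equation \eqref{eq:bas02}, and combining by the triangle inequality---is exactly Stroock's route and is what the paper intends.
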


\begin{theorem}{[}Harnack Inequality{]} There exists a constant $C>0$
depending only on $n,\lambda$ and $\Vert b\Vert_{L^{\infty}(BMO)}$
such that given any $v\in L^{2}(\mathbb{R}^{n})$ with $v\geq0$ and
set $u(t,x)=\varGamma_{\tau,t}v(x)$, we have 
\begin{equation}
\sup_{[s,s+R^{2}]\times B(x_{0},R)}u(t,x)\leq C\inf_{[s+3R^{2},s+4R^{2}]\times B(x_{0},R)}u(t,y)\label{eq: Harnack inequality}
\end{equation}
for any $R>0$, $(x,s)\in\mathbb{R}^{n}\times[\tau,\infty)$. \end{theorem}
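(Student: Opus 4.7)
The plan is to derive the Harnack inequality from the two-sided Aronson bound of Theorem~\ref{-main-th1}, following the route pioneered by Fabes--Stroock \cite{Fabes-Stroock-1986} and Stroock \cite{Stroock-1988} and adapted to the non-symmetric BMO setting. Two ingredients are needed: a lower bound for $u(t',y)$ on $Q^+ := [s+3R^2, s+4R^2]\times B(x_0, R)$ in terms of a space-time average over the past, and a local boundedness (Moser $L^1$-to-$L^\infty$) bound on $Q^- := [s, s+R^2]\times B(x_0, R)$ by a slightly larger space-time average.

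For the lower bound I would use only Chapman--Kolmogorov and the lower half of \eqref{eq:aron1}. Fix $(t',y)\in Q^+$ and $t\in[s, s+R^2]$. By \eqref{eq:CK-01},
\[
u(t',y) \,=\, \int_{\mathbb{R}^n} \varGamma(y,t';z,t)\, u(t,z)\,dz.
\]
For $z\in B(x_0,R)$ we have $|y-z|\le 2R$ and $t'-t\in[2R^2,4R^2]$, so the lower bound in \eqref{eq:aron1} gives $\varGamma(y,t';z,t)\ge (4M)^{-n/2}e^{-2M}R^{-n} =: c_1 R^{-n}$, with $c_1=c_1(n,M)>0$. Integrating over $B(x_0,R)$ and then averaging in $t$ over $[s,s+R^2]$ yields
\[
\inf_{Q^+} u \,\ge\, \frac{c_2}{|Q^-|}\iint_{Q^-} u(t,z)\,dz\,dt.
\]
The same estimate holds with $Q^-$ replaced by any cylinder $\widetilde Q\supset Q^-$ that stays in the past of $Q^+$ with time gap and spatial offset of order $R$ and $R^2$, giving $\inf_{Q^+} u \ge c_3 |\widetilde Q|^{-1}\iint_{\widetilde Q} u$.

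The upper bound is the local boundedness estimate $\sup_{Q^-} u \le C|\widetilde Q|^{-1}\iint_{\widetilde Q} u$ for non-negative (weak) solutions of $Lu=0$, from which the Harnack inequality follows by chaining:
\[
\sup_{Q^-} u \,\le\, \frac{C}{|\widetilde Q|}\iint_{\widetilde Q} u \,\le\, \frac{C}{c_3}\inf_{Q^+} u.
\]
I would prove it by Moser iteration applied to the powers $u^q$ for $q\ge 1$, based on a parabolic Caccioppoli-type energy inequality combined with the Sobolev embedding.

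The main obstacle is precisely this Caccioppoli inequality in the BMO setting. Testing $Lu=0$ against $\phi^2 u^{2q-1}$ with a cut-off $\phi$, the symmetric part of $A$ yields the coercive term $\lambda(2q-1)q^{-2}\int\phi^2|\nabla u^q|^2$ together with harmless cut-off errors, while the skew-symmetric part produces the cross term
\[
\int \sum_{i,j} b^{ij}\,\partial_j u\cdot\partial_i\!\left(\phi^2 u^{2q-1}\right)dx,
\]
which cannot be bounded pointwise because $b^{ij}$ is only in $\mathrm{BMO}$. Using the antisymmetry $b^{ij}=-b^{ji}$, this integral equals half of the pairing of $b^{ij}$ against the Jacobian $\partial_j u\cdot\partial_i(\phi^2 u^{2q-1}) - \partial_i u\cdot\partial_j(\phi^2 u^{2q-1})$, which lies in the local Hardy space by the compensated-compactness results of Coifman--Lions--Meyer--Semmes type established in Section~3. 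The $H^1$--$\mathrm{BMO}$ duality then bounds the cross term by $C\|b\|_{L^\infty(\mathrm{BMO})}\|\nabla u^q\|_{L^2}\|\nabla(\phi^2 u^{q})\|_{L^2}$ (after writing $u^{2q-1}\nabla u\simeq u^q\nabla u^q$), which can be absorbed into the coercive term and the cut-off error for $q$ near $1$ and iterated for larger $q$. Once this Caccioppoli inequality is in hand, standard Moser iteration delivers the local boundedness and the proof concludes as above.
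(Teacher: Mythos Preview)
The paper does not prove this theorem; it is stated as a consequence of the two-sided Aronson estimate (Theorem~\ref{-main-th1}) with a reference to Stroock~\cite{Stroock-1988}, where both halves of the Harnack inequality are read off from the kernel bounds \eqref{eq:aron1} together with the Chapman--Kolmogorov identity, without any further energy or Caccioppoli estimates.

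Your lower-bound step is exactly this standard move and matches the intended route. For the upper bound you take a genuinely different path, re-running Moser iteration from a Caccioppoli inequality in the BMO setting. This can be made to work --- it is essentially the strategy of~\cite{Seregin-2012} --- and your identification of the cross term is correct: after using the antisymmetry of $b$ the surviving piece is $q^{-1}\int \langle \nabla(\phi^2 u^q),\, b\cdot\nabla(u^q)\rangle\,dx$, to which Proposition~\ref{the: compensated compactness} applies. Two points are glossed over, though: first, $u^q$ is only locally in $H^1$, so an auxiliary cut-off is needed before invoking the $\mathcal H^1$--$\mathrm{BMO}$ duality; second, the absorption must succeed uniformly in the Moser exponent $q$, which is not automatic since the cross-term bound $C\|b\|_{\mathrm{BMO}}/q$ and the coercive factor $\lambda(2q-1)/q^2$ are of the same order in $q$. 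Given that Theorem~\ref{-main-th1} is already in hand, the route the paper points to is shorter: in~\cite{Stroock-1988} the local boundedness is also deduced from the kernel estimate (exploiting that $u=\Gamma_{\tau,t}v$ is a global semigroup solution, so Chapman--Kolmogorov is available at every intermediate time), and the Harnack constant then depends only on the Aronson constant $M$, hence only on $n$, $\lambda$, and $\|b\|_{L^\infty(\mathrm{BMO})}$.
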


The Harnack inequality is also established by G. Seregin and etc.
in \cite{Seregin-2012} under a bit additional technical conditions
than those stated in the theorem above.

\section{Several technique facts}

In this and next several sections, we are going to prove the main
result, Aronson estimate. In this section, we prove several technique
facts which will be needed in the proof of the main result.

The first result we need is a variation of Coifman-Meyer's compensated
compactness Theorem \cite{Coifman-1971,Coifman-1992,Lemarie-Rieusset-book-2016}
which highlights the importance of the Hardy spaces in the study of
partial differential equations.

We first recall some facts on $\textrm{BMO}$ functions \cite{Stein-book-1970,John-Nirenberg-1961}.
A function $f$ is in $\textrm{BMO}(\mathbb{R}^{n})$ if 
\begin{equation}
\Vert f\Vert_{\textrm{BMO}}=\sup_{B\subset\mathbb{R}^{n}}\frac{1}{\vert B\vert}\int_{B}\vert f(x)-f_{B}\vert\;dx<\infty
\end{equation}
where $f_{B}=\frac{1}{\vert B\vert}\int_{B}f(x)\;dx$ and the supremum
is taken over all open balls $B\in\mathbb{R}^{n}$ (in what follows,
$B_{r}(x)$ or $B(x,r)$ denotes the ball centered at $x$ with radius
$r$). If define another norm 
\begin{equation}
\Vert f\Vert_{\textrm{BMO}_{p}}^{p}=\sup_{B\subset\mathbb{R}^{n}}\frac{1}{\vert B\vert}\int_{B}\vert f(x)-f_{B}\vert^{p}\;dx<\infty
\end{equation}
for any $1\leq p<\infty$, John-Nirenberg inequality \cite{John-Nirenberg-1961}
(see also for example, Appexdix in D. W. Stroock and S. R. S. Varadhan
\cite{Stroock-Varadhan-book-1979}) implies that $\Vert\cdot\Vert_{\textrm{BMO}_{p}}$
are equivalent for different $p$.

The original version of the compensated compactness Theorem, which
will be used in our proof of the lower bound of Aronson estimate,
can be stated as following

\begin{proposition}\label{the: compensated compactness} Let vector
fields $E,B$ satisfy $E\in L^{p}(\mathbb{R}^{n})^{n}$, $B\in L^{q}(\mathbb{R}^{n})^{n}$
with $\frac{1}{p}+\frac{1}{q}=1$ ($p\geq1$, $q\geq1$) and $\nabla\cdot E=0$,
$\nabla\times B=0$. Then $E\cdot B\in\mathcal{H}^{1}$ where $\mathcal{H}^{1}$
is the Hardy space, and 
\begin{equation}
\Vert E\cdot B\Vert_{\mathcal{H}^{1}}\leq C\Vert E\Vert_{p}\Vert B\Vert_{q}.
\end{equation}
In particular, there is a constant $C$ depending on the dimension
$n$ and $p>1$ such that 
\begin{equation}
\left\Vert \nabla f\times\nabla g\right\Vert _{\mathcal{H}^{1}}\leq C\left\Vert \nabla f\right\Vert _{p}\left\Vert \nabla g\right\Vert _{q}\label{eq:dr1}
\end{equation}
for any $f,g\in C_{0}^{\infty}(\mathbb{R}^{n})$, where $\frac{1}{p}+\frac{1}{q}=1$.
Hence 
\begin{equation}
\left|\int_{\mathbb{R}^{n}}\left\langle \nabla f(x),b(x)\cdot\nabla g(x)\right\rangle dx\right|\leq C\left\Vert b\right\Vert _{\textrm{BMO}}\left\Vert \nabla f\right\Vert _{2}\left\Vert \nabla g\right\Vert _{2}\label{eq:c-c-e}
\end{equation}
for any $f,g\in H^{1}\left(\mathbb{R}^{n}\right)$ and for any $b=\left(b^{ij}\right)\in\textrm{BMO}$
which is skew-symmetric, $b^{ij}=-b^{ji}$. \end{proposition}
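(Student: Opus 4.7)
The plan is to deduce all three parts of the proposition from two classical pillars: the Coifman--Lions--Meyer--Semmes compensated compactness theorem (which is precisely the first assertion, concerning $E\cdot B$) and Fefferman's duality $(\mathcal{H}^{1})^{*}=\textrm{BMO}$. The first part will simply be cited from the listed references, with no independent proof attempted.

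The first step is to derive the Jacobian estimate \eqref{eq:dr1}. For $f,g\in C_{0}^{\infty}(\mathbb{R}^{n})$ and fixed indices $1\leq i<j\leq n$, set
\[
E=(\partial_{j}f)\,e_{i}-(\partial_{i}f)\,e_{j},\qquad B=\nabla g,
\]
where $e_{k}$ denotes the $k$th standard basis vector of $\mathbb{R}^{n}$. Schwarz's identity gives $\nabla\cdot E=\partial_{i}\partial_{j}f-\partial_{j}\partial_{i}f=0$ in the distributional sense, while $B$ is plainly curl-free; moreover $\|E\|_{p}\leq\|\nabla f\|_{p}$ and $\|B\|_{q}=\|\nabla g\|_{q}$. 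Applying the $E\cdot B$ estimate with exponents $p,q$ yields
\[
\|\partial_{j}f\,\partial_{i}g-\partial_{i}f\,\partial_{j}g\|_{\mathcal{H}^{1}}\leq C\|\nabla f\|_{p}\|\nabla g\|_{q},
\]
and summing over the $\binom{n}{2}$ index pairs produces \eqref{eq:dr1}.

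To reach the BMO pairing \eqref{eq:c-c-e}, the next step is to exploit skew-symmetry $b^{ij}=-b^{ji}$ to rewrite
\[
\int_{\mathbb{R}^{n}}\langle\nabla f,\,b\cdot\nabla g\rangle\,dx=\sum_{1\leq i<j\leq n}\int_{\mathbb{R}^{n}}b^{ij}\bigl(\partial_{i}f\,\partial_{j}g-\partial_{j}f\,\partial_{i}g\bigr)dx.
\]
Each parenthesised factor lies in $\mathcal{H}^{1}$ with norm at most $C\|\nabla f\|_{2}\|\nabla g\|_{2}$ by the Jacobian estimate just obtained (with $p=q=2$). Fefferman's duality then bounds each summand by $C\|b^{ij}\|_{\textrm{BMO}}\|\nabla f\|_{2}\|\nabla g\|_{2}$, and Cauchy--Schwarz over the finite collection of index pairs, combined with the Frobenius-type convention $\|b\|_{\textrm{BMO}}^{2}=\sum_{i<j}\|b^{ij}\|_{\textrm{BMO}}^{2}$ used throughout the paper, produces \eqref{eq:c-c-e}.

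The one point that will require care is the very meaning of the left-hand side of \eqref{eq:c-c-e} when $f,g\in H^{1}(\mathbb{R}^{n})$ and $b\in\textrm{BMO}$: the pointwise product $b\cdot\nabla g$ is not in general in any elementary function space, so the integral must be interpreted either as the continuous extension to $H^{1}\times H^{1}$ of the bilinear form first defined on $C_{0}^{\infty}\times C_{0}^{\infty}$, or, equivalently, through the $\mathcal{H}^{1}$--BMO pairing against the Jacobians $J_{ij}=\partial_{i}f\,\partial_{j}g-\partial_{j}f\,\partial_{i}g$. Once this interpretation is fixed, a standard mollification/density argument transfers the smooth estimate to all of $H^{1}\times H^{1}$, completing the proof.
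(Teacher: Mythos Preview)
Your proposal is correct. In fact, the paper does not supply a proof of this proposition at all: it is presented as ``the original version of the compensated compactness Theorem'' and simply cited from \cite{Coifman-1971,Coifman-1992,Lemarie-Rieusset-book-2016}, with the consequences \eqref{eq:dr1} and \eqref{eq:c-c-e} stated without derivation. Your write-up fills in exactly the standard argument one would expect---constructing the divergence-free field $E=(\partial_{j}f)e_{i}-(\partial_{i}f)e_{j}$ to obtain the $2\times2$ Jacobians, then pairing with $b^{ij}\in\textrm{BMO}$ via Fefferman duality after exploiting skew-symmetry---so there is nothing to compare against beyond noting that you have made explicit what the paper leaves implicit.
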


To prove the upper bound of Aronson estimate, we need the following
estimate, in the same spirit of compensated compactness.

\begin{proposition}\label{the: compensated compactness 2} There
is a universal constant $C$ depending only on the dimension $n$,
such that 
\begin{equation}
\Vert f\nabla_{\xi}f\Vert_{\mathcal{H}^{1}}\leq C\vert\xi\vert\Vert\nabla f\Vert_{2}\Vert f\Vert_{2}.
\end{equation}
for any $f\in H^{1}(\mathbb{R}^{n})=W^{1,2}(\mathbb{R}^{n})$ and
$\xi\in\mathbb{R}^{n}$, where $\left\Vert \cdot\right\Vert _{\mathcal{H}^{1}}$
denotes the Hardy norm.\end{proposition}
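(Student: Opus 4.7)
I would begin from the algebraic identity
$$f\nabla_{\xi}f=\tfrac{1}{2}\xi\cdot\nabla(f^{2}),$$
which identifies $g:=f(\xi\cdot\nabla f)$ with the divergence $\mathrm{div}(\tfrac{1}{2}\xi f^{2})$ of an $L^{1}$ vector field. This already guarantees the vanishing mean $\int g=0$ and, by Cauchy--Schwarz, the trivial $L^{1}$-bound $\|g\|_{1}\leq|\xi|\,\|f\|_{2}\|\nabla f\|_{2}$. The task is to upgrade this to the $\mathcal{H}^{1}$-bound by extracting the extra cancellation encoded in the gradient structure.

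The natural device is a Helmholtz decomposition of the $L^{2}$ vector field $f\xi$: write
$$f\xi=u+\nabla p,$$
where $u$ is the divergence-free part (the Leray projection of $f\xi$) and $p$ is determined by $\Delta p=\xi\cdot\nabla f$. The $L^{2}$-boundedness of the Leray projection yields $\|u\|_{2}\leq C|\xi|\|f\|_{2}$. This decomposes
$$g=u\cdot\nabla f+\nabla p\cdot\nabla f.$$
The first piece falls exactly within the range of Proposition \ref{the: compensated compactness}: $u$ is divergence-free in $L^{2}$ and $\nabla f$ is curl-free in $L^{2}$, so the div-curl lemma delivers
$$\|u\cdot\nabla f\|_{\mathcal{H}^{1}}\leq C\|u\|_{2}\|\nabla f\|_{2}\leq C|\xi|\|f\|_{2}\|\nabla f\|_{2}.$$

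The hard part is the remaining term $\nabla p\cdot\nabla f$: both factors are curl-free, and the div-curl lemma does not apply. Here I would exploit the explicit Riesz-transform representation $\nabla p=-R(R_{\xi}f)$, where $R=(R_{1},\dots,R_{n})$ is the Riesz vector and $R_{\xi}=\sum_{j}\xi_{j}R_{j}$. By Fefferman--Stein duality $(\mathcal{H}^{1})^{*}=\textrm{BMO}$, it suffices to establish
$$\Bigl|\int\phi\,\nabla p\cdot\nabla f\,dx\Bigr|\leq C|\xi|\,\|\phi\|_{\textrm{BMO}}\|f\|_{2}\|\nabla f\|_{2}$$
for smooth $\phi$ with $\|\phi\|_{\textrm{BMO}}\leq 1$. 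Integration by parts and rearrangement produce Riesz-transform commutators $[R_{k},M_{\phi}]$, and the Coifman--Rochberg--Weiss bound $\|[R_{k},M_{\phi}]\|_{L^{2}\to L^{2}}\leq C\|\phi\|_{\textrm{BMO}}$, combined with the $L^{2}$-estimates $\|R_{\xi}f\|_{2}\leq|\xi|\|f\|_{2}$ and $\|\nabla f\|_{2}$, closes the argument. Summing the two contributions yields the claimed inequality; the gradient-gradient pairing is the single non-routine technical step.
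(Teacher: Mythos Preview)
Your Helmholtz splitting $f\xi=u+\nabla p$ and the use of the div--curl lemma on $u\cdot\nabla f$ are perfectly sound, and this route is genuinely different from the paper's. The difficulty is the second piece, and here your sketch has a real gap: the commutator manoeuvre you propose is circular.

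Concretely, write $h=R_{\xi}f$, $F=|\nabla|f$, so $\partial_{k}p=-R_{k}h$ and $\partial_{k}f=R_{k}F$. Testing $\nabla p\cdot\nabla f=-\sum_{k}(R_{k}h)(\partial_{k}f)$ against $\phi\in\textrm{BMO}$ and commuting one $R_{k}$ across $M_{\phi}$ (using $R_{k}^{*}=-R_{k}$ and $\sum_{k}R_{k}\partial_{k}=-|\nabla|$) gives
\[
\int\phi\,\nabla p\cdot\nabla f\;=\;-\int\phi\,hF\;+\;O\bigl(|\xi|\,\|\phi\|_{\textrm{BMO}}\|f\|_{2}\|\nabla f\|_{2}\bigr).
\]
A second commutation (moving $R_{\xi}$ off $h$) then yields
\[
-\int\phi\,hF\;=\;\int\phi\,f(\xi\cdot\nabla f)\;+\;O(\cdots)\;=\;\int\phi\,g\;+\;O(\cdots).
\]
Combining, $\int\phi\,\nabla p\cdot\nabla f=\int\phi\,g+O(\cdots)$. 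But $\int\phi\,g=\int\phi\,(u\cdot\nabla f)+\int\phi\,(\nabla p\cdot\nabla f)$, so the identity collapses to $0=\int\phi\,(u\cdot\nabla f)+O(\cdots)$: you have merely re-derived the div--curl bound and obtained \emph{no} control on $\int\phi\,\nabla p\cdot\nabla f$ itself. Every natural placement of the CRW commutator produces the same loop, because the bilinear map $(a,b)\mapsto\sum_{k}(R_{k}a)(R_{k}b)$ differs from $ab$ only by an $\mathcal{H}^{1}$ term, and here $a,b$ are both built from $f$ in a symmetric way.

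The paper avoids this entirely by working with the maximal-function characterisation of $\mathcal{H}^{1}$. One writes $g=\tfrac{1}{2}\mathrm{div}(f^{2}\xi)$, transfers the divergence onto the mollifier $h_{t}$, and then splits $f(y)=\bigl(f(y)-\fint_{B_{t}(x)}f\bigr)+\fint_{B_{t}(x)}f$. The oscillation piece is handled by Sobolev--Poincar\'e (the ball radius $t$ supplies exactly the missing scale), and the mean piece is treated by integrating by parts back, returning $\nabla f$. Both are then dominated pointwise by products of maximal functions $M(|f|^{\alpha})^{1/\alpha}M(|\nabla f|^{\beta})^{1/\beta}$ with $\alpha,\beta<2$, and the Hardy bound follows. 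If you want to salvage a duality/commutator proof, you would need an \emph{independent} estimate on the gradient--gradient term, not one that feeds back into $g$; nothing in the CRW toolkit supplies that directly.
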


\begin{proof} Let $h$ be any smooth non-negative function on $\mathbb{R}^{n}$,
with its support in the unit ball $B_{1}(0)$ such that $\int_{\mathbb{R}^{n}}h(x)dx=1$,
and for $t>0$, $h_{t}(x)=t^{-n}h\left(x/t\right)$. Notice that $f\nabla_{\xi}f=\frac{1}{2}\nabla\cdot(f^{2}\xi)$
in $L^{1}(\mathbb{R}^{n})$, so 
\begin{eqnarray*}
h_{t}\ast(f\nabla_{\xi}f)(x) & = & \frac{1}{2}\int_{B_{t}(x)}\nabla h_{t}(x-y)\cdot\xi f^{2}(y)dy\\
 & = & \int_{B_{t}(x)}\frac{1}{t^{n+1}}\nabla h\left(\frac{x-y}{t}\right)\cdot\xi f^{2}(y)dy\\
 & = & \int_{B_{t}(x)}\frac{1}{t^{n+1}}\nabla h\left(\frac{x-y}{t}\right)\cdot\xi f(y)\left[f(y)-\fint_{B_{t}(x)}f\right]dy\\
 &  & +\int_{B_{t}(x)}\frac{1}{t^{n+1}}\nabla h\left(\frac{x-y}{t}\right)\cdot\xi f(y)\fint_{B_{t}(x)}f(y)dy\\
 & = & I_{1}+I_{2}
\end{eqnarray*}
where $\fint_{B_{t}(x)}$ denotes the average integral over the ball
$B_{t}(x)$, that is, $|B_{t}(x)|^{-1}\int_{B_{t}(x)}$. For the first
term on the right-hand side, we have 
\begin{equation}
|I_{1}|\leq C\left[\fint_{B_{t}(x)}\vert\xi f\vert^{\alpha}\right]^{\frac{1}{\alpha}}\left[\fint_{B_{t}(x)}\left|\left(f(y)-\fint_{B_{t}(x)}f\right)t^{-1}\right|^{\alpha'}\right]^{\frac{1}{\alpha'}}
\end{equation}
where $\alpha\in[1,2)$, $\frac{1}{\alpha}+\frac{1}{\alpha'}=1$.
Choose $\alpha,\beta$ such that $1\leq\alpha,\beta<2$ and $\frac{1}{\alpha}+\frac{1}{\beta}=1+\frac{1}{n}$.
Then by the Sobolev-Poincaré inequality, we have 
\begin{equation}
\left[\fint_{B_{t}(x)}\left|\left(f-\fint_{B_{t}(x)}f\right)t^{-1}\right|^{\alpha'}\right]^{\frac{1}{\alpha'}}\leq C\left(\fint_{B_{t}(x)}\vert\nabla f\vert^{\beta}\right)^{\frac{1}{\beta}}.
\end{equation}
For the second term on the right-hand side, we integrate by part again
to obtain 
\begin{equation}
|I_{2}|=\left|\int_{B_{t}(x)}h\left(\frac{x-y}{t}\right)\frac{1}{t^{n}}\cdot\textrm{div}(\xi f(y))\fint_{B_{t}(x)}f(y)dy\right|\leq C\vert\xi\vert\fint_{B_{t}(x)}\vert\nabla f(y)\vert dy\fint_{B_{t}(x)}f(y)dy.
\end{equation}
By using these estimates we thus conclude that 
\begin{eqnarray*}
\sup_{t>0}\vert\{h_{t}\ast(\xi f\cdot\nabla f)\}(x)\vert & \leq & C\vert\xi\vert\sup_{t>0}\left(\fint_{B_{t}(x)}\vert f\vert^{\alpha}\right)^{\frac{1}{\alpha}}\sup_{t>0}\left(\fint_{B_{t}(x)}\vert\nabla f\vert^{\beta}\right)^{\frac{1}{\beta}}\\
 &  & +C\vert\xi\vert\sup_{t>0}\left(\fint_{B_{t}(x)}\vert f\vert\right)\sup_{t>0}\left(\fint_{B_{t}(x)}\vert\nabla f\vert\right)\\
 & = & C\vert\xi\vert[M(\vert f\vert^{\alpha})^{\frac{1}{\alpha}}M((\vert\nabla f\vert^{\beta})^{\frac{1}{\beta}}+M(\vert f\vert)M((\vert\nabla f\vert)]
\end{eqnarray*}
where $M(f)$ is the maximal function. Since $1\leq\alpha<2,\;1<\beta<2$,
we have $\Vert M(\vert f\vert^{\alpha})^{\frac{1}{\alpha}}\Vert_{2}\leq C\Vert f\Vert_{2}$,
$\Vert M(\vert\nabla f\vert^{\beta})^{\frac{1}{\beta}}\Vert_{2}\leq C\Vert\nabla f\Vert_{2}$
and similarly $\Vert M(\vert f\vert)\Vert_{2}\leq C\Vert f\Vert_{2}$,
$\Vert M(\vert\nabla f\vert)\Vert_{2}\leq C\Vert\nabla f\Vert_{2}$.
So $\sup_{t>0}\vert h_{t}\ast(\xi f\cdot\nabla f)\vert\in L^{1}$
and 
\begin{equation}
\Vert f\cdot\nabla_{\xi}f\Vert_{\mathcal{H}^{1}}\leq C\vert\xi\vert\Vert\nabla f\Vert_{2}\Vert f\Vert_{2}.
\end{equation}
\end{proof}

Given a function $b\in L^{\infty}([0,\infty),\textrm{BMO}(\mathbb{R}^{n}))$,
we want to approximate it by a mollified sequence, which is not trivial
as it looks. A simple example is a vector field $b(t)$ which depends
only on $t$, not on the space variables. Then it may not be in $L_{loc}^{1}$
and there is no approximations by mollifying sequences. However, the
problem considered here allow us to add a constant to it, i.e. consider
$b(t,x)+f(t)$, where $f(t)$ is skew-symmetric so that it will not
alter the weak solution formulation of the corresponding parabolic
equations. So by subtracting the mean value of $b$ on a unit ball,
we may assume that 
\begin{equation}
b_{B(0,1)}(t)=\fint_{B(0,1)}b(t,x)\;dx=0\qquad\mbox{for all }t\in[0,\infty).\label{eq: a nice version}
\end{equation}
Then for any $r>0$
\begin{eqnarray}
\vert b_{B(r)}(t)\vert & = & \vert b_{B(r)}(t)-b_{B(1)}(t)\vert=\vert\fint_{B(1)}b_{B(r)}(t)-b(t,x)\;dx\vert\\
 & \leq & r^{n}\fint_{B(r)}\vert b_{B(r)}(t)-b(t,x)\vert\;dx\leq r^{n}\Vert b\Vert_{L^{\infty}(\textrm{BMO}(\mathbb{R}^{n}))}
\end{eqnarray}
where $B(r)=B(0,r)$. By the definition of $\textrm{BMO}$ functions,
we have 
\begin{eqnarray}
\fint_{B(r)}\vert b(t,x)-b_{B(r)}(t)\vert^{p}\;dx\leq C\Vert b\Vert_{L^{\infty}(\textrm{BMO}(\mathbb{R}^{n}))}^{p},
\end{eqnarray}
which implies that $b\in L_{loc}^{p}([0,\infty)\times\mathbb{R}^{n})$
for any $1\leq p<\infty$.

\begin{proposition}\label{prop-bmo} Take $\Phi\in C_{0}^{\infty}(B(1))$
and $\eta\in C_{0}^{\infty}([-1,1])$ with $\Phi,\eta\geq0$ and 
\[
\int_{B(1)}\Phi(x)\;dx=\int_{[-1,1]}\eta(t)\;dt=1.
\]
Let $\Phi_{\epsilon}(x)=\frac{1}{\epsilon^{n}}\Phi(\frac{x}{\epsilon})$
and $\eta_{\epsilon}(t)=\frac{1}{\epsilon}\eta(\frac{t}{\epsilon})$.
Suppose $b\in L^{\infty}(\textrm{BMO}(\mathbb{R}^{n}))$ and satisfies
\eqref{eq: a nice version}. Define 
\begin{equation}
b_{\epsilon}(t,x)=\int_{-\epsilon}^{+\epsilon}\int_{B(0,\epsilon)}\Phi_{\epsilon}(y)\eta_{\epsilon}(s)b(t-s,x-y)\;dyds.
\end{equation}
Then $b_{\epsilon}\rightarrow b$ locally in $L^{p}$ for any $1\leq p<\infty$,
and 
\begin{equation}
\Vert b_{\epsilon}\Vert_{L^{\infty}(\textrm{BMO}(\mathbb{R}^{n}))}\leq\Vert b\Vert_{L^{\infty}(\textrm{BMO}(\mathbb{R}^{n}))}.\label{eq:non-de-01}
\end{equation}
\end{proposition}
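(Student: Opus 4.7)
The plan is to handle the two claims separately: the BMO bound \eqref{eq:non-de-01} follows from a direct computation exploiting that convolution commutes with the averaging operator, and the local $L^{p}$ convergence is a routine consequence of the membership $b\in L^{p}_{loc}$ that was already established just before the statement of the proposition.

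For the BMO bound, fix $t\geq 0$ and a ball $B=B(x_0,r)\subset\mathbb{R}^n$. The key observation is that, after exchanging the order of integration (Fubini), the mean of $b_\epsilon(t,\cdot)$ on $B$ is
\[
(b_\epsilon)_B(t)=\int_{-\epsilon}^{\epsilon}\!\!\int_{B(0,\epsilon)}\Phi_\epsilon(y)\eta_\epsilon(s)\, b_{B-y}(t-s)\,dy\,ds,
\]
where $b_{B-y}(t-s)$ denotes the average of $b(t-s,\cdot)$ over the translated ball $B-y$, which is obtained by the change of variable $z=x-y$ inside $\frac{1}{|B|}\int_B b(t-s,x-y)\,dx$. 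Subtracting this identity from the defining expression for $b_\epsilon(t,x)$ and applying Minkowski's integral inequality gives
\[
\frac{1}{|B|}\int_B |b_\epsilon(t,x)-(b_\epsilon)_B(t)|\,dx\leq\int\Phi_\epsilon(y)\eta_\epsilon(s)\cdot\frac{1}{|B-y|}\int_{B-y}|b(t-s,z)-b_{B-y}(t-s)|\,dz\,dy\,ds.
\]
The inner average is bounded by $\|b(\cdot,t-s)\|_{\textrm{BMO}}\leq\|b\|_{L^\infty(\textrm{BMO})}$. Since $\Phi_\epsilon$ and $\eta_\epsilon$ integrate to one, taking the supremum over $B$ and then over $t$ yields \eqref{eq:non-de-01}.

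For the local $L^p$ convergence, the paragraph just preceding the proposition already shows that the normalization \eqref{eq: a nice version} together with the John--Nirenberg inequality forces $b\in L^{p}_{loc}([0,\infty)\times\mathbb{R}^n)$ for every $1\leq p<\infty$. The mollifier $b_\epsilon$ is precisely the standard convolution of $b$ with the product kernel $\Phi_\epsilon(y)\eta_\epsilon(s)$, so the classical continuity-of-translation argument (approximate $b$ uniformly on a slightly larger compact set by a continuous function and pass to the limit) gives $b_\epsilon\to b$ in $L^p$ on every compact subset of $[0,\infty)\times\mathbb{R}^n$, with the usual convention that $b$ is extended in the time variable past $t=0$ (e.g.\ by reflection or by $b(0,\cdot)$), which is harmless since we only care about interior convergence.

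There is no genuine obstacle in this proof. The one subtle point\textemdash and the only reason the statement is not entirely trivial\textemdash is that $b\in L^\infty(\textrm{BMO})$ need not be locally integrable without the normalization \eqref{eq: a nice version}; this is precisely why the proposition is stated under that assumption. Once $b$ is known to lie in $L^p_{loc}$, both assertions reduce to standard manipulations, and the computation above shows that the BMO seminorm is in fact non-increasing under this mollification because $(b_\epsilon)_B$ naturally unpacks into an average of $b$-means over translated balls.
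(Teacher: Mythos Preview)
Your argument is correct and follows the same route as the paper: for the BMO bound you both interchange the $x$--average with the convolution integral, recognize the inner term as a mean oscillation over a translated ball, and use that $\Phi_\epsilon\eta_\epsilon$ has unit mass. The paper's written proof actually addresses only \eqref{eq:non-de-01} and leaves the local $L^p$ convergence implicit; your extra paragraph supplying the standard mollifier argument (using the $L^p_{loc}$ membership derived from \eqref{eq: a nice version}) is a welcome completion.
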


\begin{proof} Let $x_{0}$, and $r>0$ be fixed. Let $\fint$ denote
the average integral over $B(x_{0},r)$, that is, 
\[
\fint\phi(y)dy=\fint_{B(x_{0},r)}\phi(y)\;dy.
\]
For $x\in B(x_{0},r)$ we have 
\begin{eqnarray*}
 &  & \left|b_{\epsilon}(t,x)-\fint b_{\epsilon}(t,y)\;dy\right|\\
 & = & \left|\int_{-\epsilon}^{+\epsilon}\int_{B(0,\epsilon)}\Phi_{\epsilon}(y)\eta_{\epsilon}(s)b(t-s,x-y)\;dyds\right.\\
 &  & \left.-\fint_{B(x_{0},r)}\int_{-\epsilon}^{+\epsilon}\int_{B(0,\epsilon)}\Phi_{\epsilon}(z)\eta_{\epsilon}(s)b(t-s,y-z)\;dzdsdy\right|\\
 & = & \left|\int_{-\epsilon}^{+\epsilon}\int_{B(0,\epsilon)}\Phi_{\epsilon}(y)\eta_{\epsilon}(s)[b(t-s,x-y)-\fint b(t-s,z-y)\;dz]dyds\right|\\
 & \leq & \int_{-\epsilon}^{+\epsilon}\int_{B(0,\epsilon)}\Phi_{\epsilon}(y)\eta_{\epsilon}(s)\left|b(t-s,x-y)-\fint b(t-s,z-y)\;dz\right|dyds,
\end{eqnarray*}
so that 
\begin{eqnarray*}
 &  & \fint\left|b_{\epsilon}(t,x)-\fint b_{\epsilon}(t,y)\;dy\right|\;dx\\
 & \leq & \fint\int_{-\epsilon}^{+\epsilon}\int_{B(0,\epsilon)}\Phi_{\epsilon}(y)\eta_{\epsilon}(s)\left|b(t-s,x-y)-\fint b(t-s,z-y)\;dz\right|dydsdx\\
 & = & \int_{-\epsilon}^{+\epsilon}\int_{B(0,\epsilon)}\Phi_{\epsilon}(y)\eta_{\epsilon}(s)\fint\left|b(t-s,x-y)-\fint b(t-s,z-y)\;dz\right|\;dxdyds\\
 & \leq & \int_{-\epsilon}^{+\epsilon}\int_{B(0,\epsilon)}\Phi_{\epsilon}(y)\eta_{\epsilon}(s)\Vert b\Vert_{L^{\infty}(\textrm{BMO}(\mathbb{R}^{n}))}\;dyds\\
 & = & \Vert b\Vert_{L^{\infty}(\textrm{BMO}(\mathbb{R}^{n}))}.
\end{eqnarray*}
Hence we have proved $\Vert b_{\epsilon}\Vert_{L^{\infty}(\textrm{BMO}(\mathbb{R}^{n}))}\leq\Vert b\Vert_{L^{\infty}(\textrm{BMO}(\mathbb{R}^{n}))}$.
\end{proof}

The lattice property in the proposition below of the BMO space should
be well known, for completeness, a proof is attached.

\begin{proposition} Suppose $f,g\in\textrm{BMO}(\mathbb{R}^{n})$,
then $f\wedge g$ and $f\vee g\in\textrm{BMO}(\mathbb{R}^{n})$. Moreover,
we have 
\begin{equation}
\Vert f\wedge g\Vert_{BMO}\leq C\left(\Vert f\Vert_{\textrm{BMO}}+\Vert g\Vert_{\textrm{BMO}}\right)
\end{equation}
where $C$ only depends on $n$. \end{proposition}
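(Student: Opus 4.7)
The plan is to reduce the lattice property to the single-function statement that $|f|\in\textrm{BMO}$ whenever $f\in\textrm{BMO}$, and then exploit the elementary algebraic identities
\[
f\wedge g=\tfrac{1}{2}(f+g)-\tfrac{1}{2}|f-g|,\qquad f\vee g=\tfrac{1}{2}(f+g)+\tfrac{1}{2}|f-g|,
\]
which express $f\wedge g$ and $f\vee g$ in terms of the BMO-linear operations of addition and scalar multiplication, together with the nonlinear operation $h\mapsto|h|$.

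The main step is to show that $\||f|\|_{\textrm{BMO}}\leq 2\|f\|_{\textrm{BMO}}$. I would fix an arbitrary open ball $B\subset\mathbb{R}^{n}$ and use the reverse triangle inequality $\bigl||f(x)|-|f_{B}|\bigr|\leq|f(x)-f_{B}|$ to obtain
\[
\frac{1}{|B|}\int_{B}\bigl||f(x)|-|f_{B}|\bigr|\,dx\leq\frac{1}{|B|}\int_{B}|f(x)-f_{B}|\,dx\leq\|f\|_{\textrm{BMO}}.
\]
The constant $|f_{B}|$ is not the mean of $|f|$ on $B$, but it is well known (and immediate from the triangle inequality) that replacing the ``best'' constant $(|f|)_{B}$ by any other constant $c$ costs at most a factor of $2$ in the BMO seminorm. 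Applying this with $c=|f_{B}|$ yields $\||f|\|_{\textrm{BMO}}\leq 2\|f\|_{\textrm{BMO}}$.

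Finally I combine. From the definition of the BMO seminorm it is immediate that $\|\alpha h_{1}+\beta h_{2}\|_{\textrm{BMO}}\leq|\alpha|\|h_{1}\|_{\textrm{BMO}}+|\beta|\|h_{2}\|_{\textrm{BMO}}$. Therefore
\[
\|f\wedge g\|_{\textrm{BMO}}\leq\tfrac{1}{2}\|f+g\|_{\textrm{BMO}}+\tfrac{1}{2}\bigl\||f-g|\bigr\|_{\textrm{BMO}}\leq\tfrac{1}{2}\bigl(\|f\|_{\textrm{BMO}}+\|g\|_{\textrm{BMO}}\bigr)+\bigl(\|f\|_{\textrm{BMO}}+\|g\|_{\textrm{BMO}}\bigr),
\]
giving the claimed bound with $C=\tfrac{3}{2}$, and the same bound works verbatim for $f\vee g$. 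There is no real obstacle in this proof; the only subtle point, and the one worth stating carefully, is the passage from the ``wrong'' centering constant $|f_{B}|$ to the true mean $(|f|)_{B}$, which is the only place where a nontrivial (but universal) factor enters.
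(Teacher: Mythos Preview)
Your argument is correct. The paper's proof is slightly different in implementation: it works directly with the pointwise Lipschitz inequality
\[
|a\wedge b-c\wedge d|\leq|a-c|+|b-d|
\]
and applies it with $c=f_{B}$, $d=g_{B}$, using the $\textrm{BMO}_{2}$ norm (and John--Nirenberg to return to the standard norm). You instead rewrite $f\wedge g=\tfrac12(f+g)-\tfrac12|f-g|$ and reduce to the one-variable fact $\||h|\|_{\textrm{BMO}}\leq 2\|h\|_{\textrm{BMO}}$, working throughout with the $L^{1}$ oscillation. At bottom both proofs exploit the same principle---a $1$-Lipschitz operation on the values cannot increase mean oscillation---so the difference is cosmetic; your route has the minor advantage of giving an explicit dimension-free constant $C=\tfrac32$ and avoiding any appeal to the equivalence of $\textrm{BMO}_{p}$ norms.
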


\begin{proof} Here we only prove it for $f\wedge g$ and $f\vee g$
follows similar proof. Observe that for any $a,b,c,d\in\mathbb{R}$,
we have 
\begin{equation}
\vert a\wedge b-c\wedge d\vert\leq\vert a-c\vert+\vert b-d\vert.
\end{equation}
Hence for any ball $B$, 
\begin{eqnarray*}
\frac{1}{\vert B\vert}\int_{B}\vert f\wedge g(x)-(f\wedge g)_{B}\vert^{2}\;dx & \leq & \frac{1}{\vert B\vert}\int_{B}\vert f\wedge g(x)-f_{B}\wedge g_{B}\vert^{2}\;dx\\
 & \leq & \frac{2}{\vert B\vert}\int_{B}\vert f(x)-f_{B}\vert^{2}\;dx+\frac{2}{\vert B\vert}\int_{B}\vert g(x)-g_{B}\vert^{2}\;dx\\
 & \leq & C(\Vert f\Vert_{\textrm{BMO}}^{2}+\Vert g\Vert_{\textrm{BMO}}^{2})
\end{eqnarray*}
and the proof is done. \end{proof}

\section{Proof of Aronson's estimate}

The proof follows the main lines as in D. W. Stroock \cite{Stroock-1988}
and in particular E. B. Davies \cite{Davies-1987} from which a clever
use of the $h$-transforms from harmonic analysis is borrowed, while
we need to overcome several difficulties since $A$ is non-symmetric
and the skew-symmetry part $b$ is singular. These ideas are mainly
due to J. Nash \cite{Nash-1958}, J. Moser \cite{Moser-1960,Moser-1964a,Moser-1964b,Moser-1971}.

Let us begin with the proof of the upper bound.

\subsection{Proof of the upper bound}

In this part we show the upper bound:

\begin{equation}
\varGamma(x,t;\xi,\tau)\leq\frac{C}{(t-\tau)^{\frac{n}{2}}}\exp\left[-\frac{|x-\xi|^{2}}{C(t-\tau)}\right]\label{eq:aron1-1}
\end{equation}
for any $t>\tau$ and $x,\xi\in\mathbb{R}^{n}$, where $C$ depends
only on $n$, $\lambda$ and $\left\Vert b\right\Vert _{L^{\infty}\left(\textrm{BMO}\right)}$.

The main idea of E. B. Davies \cite{Davies-1987} is to consider the
$h$-transform of the fundamental solution $\varGamma\left(x,t;\xi,\tau\right)$
and apply Nash and Moser's iteration to the $h$-transforms of the
fundamental solution $\varGamma$. J. Nash's idea is to iterate the
$L^{p}$-norms of solutions to parabolic equations, and to control
the growth of the $L^{p}$-norms. The main ingredient in J. Nash's
argument is the clever use of the Nash inequality 
\begin{equation}
\Vert u\Vert_{2}^{2+\frac{4}{n}}\leq C_{n}\Vert\nabla u\Vert_{2}^{2}\Vert u\Vert_{1}^{\frac{4}{n}},\qquad\forall u\in L^{1}(\mathbb{R}^{n})\cap H^{1}(\mathbb{R}^{n})\label{eq:nash1}
\end{equation}
where $C_{n}>0$ is a constant depending only on the dimension $n$.

The Nash iteration is neatly described as the following (D. Stroock
\cite{Stroock-1988}, Lemma I.1.14, page 322).

\begin{lemma}\label{lem: inequality 1(2)} Given positive numbers
$c_{1}$, $c_{2}$, $\beta$ and $p\geq2$. Let $w$ be positive and
non-decreasing, continuous on $[0,\infty)$, and $u$ be positive
and has continuous derivatives on $[0,\infty)$. Suppose the following
differential inequality holds: 
\begin{equation}
u'(t)\leq-\frac{c_{1}}{p}\frac{t^{(p-2)}u(t)^{1+\beta p}}{w(t)^{\beta p}}+c_{2}pu(t),\quad t\geq0.
\end{equation}
Then there exists a $K(c_{1},\beta)>0$ such that 
\begin{equation}
t^{(p-1)/\beta p}u(t)\leq\left(\frac{Kp^{2}}{\delta}\right)^{\frac{1}{\beta p}}e^{\frac{c_{2}\delta t}{p}}w(t),\quad t\geq0
\end{equation}
for every $\delta\in(0,1]$. \end{lemma}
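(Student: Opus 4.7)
The plan is to reduce the Bernoulli-type differential inequality to an inequality for a single monotone quantity via the substitution $U=u^{p}$, absorb the linear term $c_{2}pu$ with an integrating factor, and then produce the $\delta$-sensitive constant by restricting the resulting time integral to a carefully tuned subinterval near $t$.

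First I would multiply the hypothesis by $p\,u(t)^{p-1}>0$ and set $U(t):=u(t)^{p}$, noting that $u^{p+\beta p}=(u^{p})^{1+\beta}$, to obtain
\[
U'(t)\;\leq\;-c_{1}\,\frac{t^{p-2}}{w(t)^{\beta p}}\,U(t)^{1+\beta}+c_{2}p^{2}\,U(t).
\]
Setting $Z(t):=e^{-c_{2}p^{2}t}U(t)$ removes the linear term and yields
\[
Z'(t)\;\leq\;-c_{1}\,\frac{t^{p-2}}{w(t)^{\beta p}}\,Z(t)^{1+\beta}\,e^{\beta c_{2}p^{2}t}.
\]
Working where $Z>0$ (the degenerate case $u(0)=0$ is handled by regularising $u\mapsto u+\varepsilon$ and letting $\varepsilon\downarrow 0$ at the end), this rearranges to the monotone form $\bigl(Z^{-\beta}\bigr)'(t)\geq c_{1}\beta\,t^{p-2}\,e^{\beta c_{2}p^{2}t}/w(t)^{\beta p}$.

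Next I would integrate from $0$ to $t$, discard the non-negative initial term $Z(0)^{-\beta}$, and use the monotonicity $w(s)\leq w(t)$ for $s\leq t$ to pull the weight outside the integral, obtaining
\[
Z(t)^{-\beta}\;\geq\;\frac{c_{1}\beta}{w(t)^{\beta p}}\int_{0}^{t}s^{p-2}\,e^{\beta c_{2}p^{2}s}\,ds.
\]
The decisive step is the $\delta$-sensitive lower bound on this integral. The idea is to restrict integration to the subinterval $[(1-\delta/p^{2})t,\,t]$ and bound the integrand below by its value at the left endpoint. Bernoulli's inequality gives $(1-\delta/p^{2})^{p-2}\geq 1-(p-2)\delta/p^{2}\geq 1/2$ for all $\delta\in(0,1]$ and $p\geq 2$, so this produces
\[
\int_{0}^{t}s^{p-2}\,e^{\beta c_{2}p^{2}s}\,ds\;\geq\;\frac{\delta}{2p^{2}}\,t^{p-1}\,e^{\beta c_{2}(p^{2}-\delta)\,t}.
\]

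Inserting this bound, taking the $-1/\beta$ power, and unwinding $Z\mapsto U\mapsto u$ (the factor $e^{c_{2}p^{2}t}$ from $U=e^{c_{2}p^{2}t}Z$ cancels against $e^{-c_{2}(p^{2}-\delta)t}$ to leave exactly $e^{c_{2}\delta t}$ inside $u^{p}$) yields
\[
t^{(p-1)/(\beta p)}\,u(t)\;\leq\;\Bigl(\tfrac{2p^{2}}{c_{1}\beta\,\delta}\Bigr)^{1/(\beta p)}e^{c_{2}\delta t/p}\,w(t),
\]
which is the claimed inequality with $K(c_{1},\beta)=2/(c_{1}\beta)$. The main obstacle I expect is precisely the calibration of the split point $\sigma=1-\delta/p^{2}$: a coarser choice such as $[t/2,\,t]$ still gives the correct algebraic factor $t^{-(p-1)/(\beta p)}$, but only the much weaker exponential $e^{c_{2}pt/2}$, whereas choosing the split length proportional to $\delta/p^{2}$ is exactly what converts the heavy penalty exponent $c_{2}p^{2}$ coming from the integrating factor into the refined exponent $c_{2}\delta/p$ demanded by the conclusion.
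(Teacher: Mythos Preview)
The paper does not supply its own proof of this lemma; it is simply quoted from Stroock \cite{Stroock-1988}, Lemma~I.1.14. Your argument is correct and is in fact the standard one: pass to $U=u^{p}$, remove the linear term with the integrating factor $e^{-c_{2}p^{2}t}$, rewrite the resulting Bernoulli inequality as $(Z^{-\beta})'\geq c_{1}\beta\,t^{p-2}e^{\beta c_{2}p^{2}t}/w(t)^{\beta p}$, integrate, and use the monotonicity of $w$. The choice of the subinterval $[(1-\delta/p^{2})t,\,t]$ is exactly the device that converts the crude exponent $c_{2}p^{2}$ into $c_{2}\delta/p$, and your cancellation check is correct.

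One small technical remark: your invocation of Bernoulli's inequality $(1-x)^{p-2}\geq 1-(p-2)x$ is only valid when $p-2\geq 1$. For $2\leq p<3$ the exponent $p-2$ lies in $[0,1)$ and Bernoulli reverses; however, in that range one has directly $(1-\delta/p^{2})^{p-2}\geq 1-\delta/p^{2}\geq 3/4$, since raising a number in $(0,1)$ to a power in $[0,1)$ increases it. So the bound $(1-\delta/p^{2})^{p-2}\geq 1/2$ still holds for all $p\geq 2$ and $\delta\in(0,1]$, and your constant $K=2/(c_{1}\beta)$ is correct. Also note that the hypothesis already has $u>0$, so $Z(0)>0$ and no regularisation $u\mapsto u+\varepsilon$ is needed.
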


The above iteration procedure works in a very general setting, and
has been explored since the publication of J. Nash' paper \cite{Nash-1958},
and it is still the major ingredient in our proof. It is surprising
that they work well even in our setting where the diffusion is very
singular.

Fortunately as well, E. B. Davies' idea \cite{Davies-1987,Davies-book-1989}
also works well for our parabolic equations. Following E. B. Davies
\cite{Davies-1987} and D. Stroock \cite{Stroock-1988}, given a smooth
function $\psi$ on $\mathbb{R}^{n}$, consider 
\begin{equation}
\varGamma^{\psi}\left(x,t;\xi,\tau\right)=e^{-\psi(x)}\varGamma\left(x,t;\xi,\tau\right)e^{\psi(\xi)}\label{eq:dav01}
\end{equation}
and the linear operator 
\[
\varGamma_{\tau,t}^{\psi}f(x)=\int_{\mathbb{R}^{n}}f(\xi)e^{-\psi(x)}\varGamma\left(x,t;\xi,\tau\right)e^{\psi(\xi)}d\xi
\]
which is defined for non-negative Borel measurable $f$ , and for
$f$ which is smooth with a compact support. It is easy to see that
the adjoint operator of $\varGamma_{\tau,t}^{\psi}$ can be identified
as the following integral operator 
\[
\varGamma_{\tau,t}^{\psi\dagger}f(x)=\int_{\mathbb{R}^{n}}f(\xi)e^{\psi(x)}\varGamma\left(\xi,t;x,\tau\right)e^{-\psi(\xi)}d\xi.
\]
That is 
\[
\left\langle \varGamma_{\tau,t}^{\psi}f,g\right\rangle =\left\langle f,\varGamma_{\tau,t}^{\psi\dagger}g\right\rangle 
\]
for any smooth functions $f$ and $g$ with compact supports.

\begin{lemma} Let $T>0,\tau\geq0$. Let $f\in C_{0}^{\infty}(\mathbb{R}^{n})$
be non-negative, and $\psi(x)=\alpha\cdot x$ where $\alpha\in\mathbb{R}^{n}$.
Define 
\[
f_{t}(x)=\varint_{\mathbb{R}^{n}}f(\xi)e^{\psi(x)-\psi(\xi)}\varGamma(x,t;\xi,\tau)d\xi=\varGamma_{\tau,t}^{\psi}f(x)
\]
for $t>\tau$, and 
\[
f_{t}^{\dagger}(x)=\varint_{\mathbb{R}^{n}}f(\xi)e^{\psi(x)-\psi(\xi)}\varGamma(\xi,T;x,T-t)d\xi
\]
for $t\in(0,T]$.

There is a constant $C>0$ depending only on $n$, such that for any
$p\geq1$, 
\begin{equation}
\frac{d}{dt}\left\Vert f_{t}\right\Vert _{2p}^{2p}\leq-\lambda\left\Vert \nabla f_{t}^{p}\right\Vert _{2}^{2}+C_{B}\frac{p^{2}\left|\alpha\right|^{2}}{\lambda}\left\Vert f_{t}\right\Vert _{2p}^{2p}\label{eq: energy inequality 1(2)}
\end{equation}
$t>\tau$, and 
\begin{equation}
\frac{d}{dt}\left\Vert f_{t}^{\dagger}\right\Vert _{2p}^{2p}\leq-\lambda\left\Vert \nabla f_{t}^{\dagger p}\right\Vert _{2}^{2}+C_{B}\frac{p^{2}\left|\alpha\right|^{2}}{\lambda}\left\Vert f_{t}^{\dagger}\right\Vert _{2p}^{2p}\label{eq: energy inequality 1(2)-1}
\end{equation}
for all $t\in(0,T]$, where 
\[
C_{B}=2C\left\Vert b\right\Vert _{L^{\infty}\left(\textrm{BMO}\right)}^{2}+2
\]
which depends only on $n$ and the $L_{t}^{\infty}\textrm{BMO}_{x}$
of $b(x,t)$. \end{lemma}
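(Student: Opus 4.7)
The plan is to derive the evolution equation for $f_{t}$, compute $\frac{d}{dt}\|f_{t}\|_{2p}^{2p}$ by integration by parts, and then bound the resulting terms separately. The symmetric elliptic part will yield the Dirichlet energy in the usual Nash fashion, while the singular skew-symmetric part is the main obstacle and must be controlled via Fefferman's $\mathcal{H}^{1}$-BMO duality combined with Proposition~\ref{the: compensated compactness 2}. The inequality for $f_{t}^{\dagger}$ will then follow by a time-reversal argument.

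First I would unwrap the Davies conjugation: setting $g_{t}(x)=e^{-\psi(x)}f_{t}(x)$, property~(2) of the fundamental solution gives $\partial_{t}g_{t}=\sum_{ij}\partial_{i}(A^{ij}\partial_{j}g_{t})$, and since $\psi(x)=\alpha\cdot x$ a direct computation produces
$$\partial_{t}f_{t}=\sum_{ij}\partial_{i}(A^{ij}\partial_{j}f_{t})-\sum_{ij}\partial_{i}(A^{ij}\alpha_{j}f_{t})-\sum_{ij}\alpha_{i}A^{ij}\partial_{j}f_{t}+\sum_{ij}\alpha_{i}A^{ij}\alpha_{j}f_{t}.$$
Multiplying by $2pf_{t}^{2p-1}$ (well defined since $f_{t}\geq 0$), integrating over $\mathbb{R}^{n}$, integrating by parts in the two divergence terms, and invoking $A=a+b$ together with the skew-symmetry identities $\nabla f_{t}\cdot b\nabla f_{t}=0$ and $\alpha\cdot b\nabla f_{t}=-\nabla f_{t}\cdot b\alpha$ collects everything into
\begin{align*}
\tfrac{d}{dt}\|f_{t}\|_{2p}^{2p} &= -2p(2p-1)\!\int f_{t}^{2p-2}\nabla f_{t}\cdot a\nabla f_{t}\,dx + 2p\!\int f_{t}^{2p}\,\alpha\cdot a\alpha\,dx\\
&\quad + 4p(p-1)\!\int f_{t}^{2p-1}\nabla f_{t}\cdot a\alpha\,dx + 4p^{2}\!\int f_{t}^{2p-1}\nabla f_{t}\cdot b\alpha\,dx.
\end{align*}

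The first two terms are immediate from uniform ellipticity and the identity $f_{t}^{2p-2}|\nabla f_{t}|^{2}=p^{-2}|\nabla f_{t}^{p}|^{2}$, contributing at most $-2\lambda\|\nabla f_{t}^{p}\|_{2}^{2}+\frac{2p|\alpha|^{2}}{\lambda}\|f_{t}\|_{2p}^{2p}$. The third term is routine: write $f_{t}^{2p-1}\nabla f_{t}=p^{-1}f_{t}^{p}\nabla f_{t}^{p}$ and apply Cauchy--Schwarz then Young's inequality to absorb $\frac{\lambda}{2}\|\nabla f_{t}^{p}\|_{2}^{2}$ at a cost of order $\frac{(p-1)^{2}|\alpha|^{2}}{\lambda^{3}}\|f_{t}\|_{2p}^{2p}$. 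The real obstacle is the fourth term, since $b^{ij}$ lies only in BMO and no pointwise bound is available. Setting $g=f_{t}^{p}$ and using $f_{t}^{2p-1}\partial_{i}f_{t}=(2p)^{-1}\partial_{i}g^{2}$ recasts it as $2p\sum_{ij}\int b^{ij}\alpha_{j}\partial_{i}g^{2}\,dx$. For each $(i,j)$, Proposition~\ref{the: compensated compactness 2} applied with $\xi=\alpha_{j}e_{i}$ shows that $\alpha_{j}\partial_{i}g^{2}=2g(\alpha_{j}e_{i}\cdot\nabla g)\in\mathcal{H}^{1}$ with norm at most $C|\alpha_{j}|\|\nabla g\|_{2}\|g\|_{2}$, so Fefferman's $\mathcal{H}^{1}$-BMO duality and Cauchy--Schwarz over $(i,j)$ yield
$$\Bigl|\sum_{ij}\!\int b^{ij}\alpha_{j}\partial_{i}g^{2}\,dx\Bigr|\leq C\|b\|_{L^{\infty}(\textrm{BMO})}|\alpha|\|\nabla f_{t}^{p}\|_{2}\|f_{t}\|_{2p}^{p}.$$
A further application of Young's inequality absorbs another $\frac{\lambda}{2}\|\nabla f_{t}^{p}\|_{2}^{2}$ at the cost of a term of order $\frac{p^{2}\|b\|_{\textrm{BMO}}^{2}|\alpha|^{2}}{\lambda}\|f_{t}\|_{2p}^{2p}$. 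Summing the four contributions leaves exactly $-\lambda\|\nabla f_{t}^{p}\|_{2}^{2}$ of Dirichlet energy and produces \eqref{eq: energy inequality 1(2)} with $C_{B}=2C\|b\|_{L^{\infty}(\textrm{BMO})}^{2}+2$.

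Finally, for $f_{t}^{\dagger}$ one observes that $(x,s)\mapsto\varGamma(\xi,T;x,T-s)$ solves, by the backward equation \eqref{eq:bas02} after the change of variable $\tau=T-s$, a forward equation in $s$ with transposed matrix $A^{ji}(x,T-s)=a^{ij}(x,T-s)-b^{ij}(x,T-s)$. Since the symmetric part is unchanged and $\|-b\|_{L^{\infty}(\textrm{BMO})}=\|b\|_{L^{\infty}(\textrm{BMO})}$, the preceding derivation applies verbatim and yields \eqref{eq: energy inequality 1(2)-1}.
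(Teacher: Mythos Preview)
Your argument is structurally the same as the paper's: differentiate $\|f_t\|_{2p}^{2p}$, integrate by parts, split into the same four pieces, and control the skew-symmetric piece by Proposition~\ref{the: compensated compactness 2} together with $\mathcal{H}^1$--BMO duality. That key step is done correctly, and the time-reversal for $f_t^{\dagger}$ is exactly what the paper does.

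There is, however, a quantitative slip in your treatment of the \emph{symmetric} cross term $4p(p-1)\int f_t^{2p-1}\nabla f_t\cdot a\alpha$. Bounding $|a\alpha|\le\lambda^{-1}|\alpha|$ and then applying Young's inequality costs you a factor $\lambda^{-3}$, as you yourself note. When you sum the four contributions you therefore cannot arrive at the stated constant $C_B=2C\|b\|_{L^\infty(\mathrm{BMO})}^2+2$, which is asserted to depend only on $n$ and $\|b\|_{L^\infty(\mathrm{BMO})}$; your constant carries an extra $\lambda^{-2}$. The paper avoids this loss by \emph{completing the square} in the $a$-bilinear form: writing
\[
-\tfrac{2p-1}{p^2}\langle\nabla f_t^p,a\nabla f_t^p\rangle-\tfrac{2(p-1)}{p}f_t^p\langle\nabla f_t^p,a\alpha\rangle
=-\tfrac{1}{p}\langle\nabla f_t^p,a\nabla f_t^p\rangle+(p-1)f_t^{2p}\langle\alpha,a\alpha\rangle-\tfrac{p-1}{p^2}\langle\nabla f_t^p-pf_t^p\alpha,\,a(\nabla f_t^p-pf_t^p\alpha)\rangle
\]
and discarding the last (nonpositive) term yields exactly $-\tfrac{\lambda}{p}\|\nabla f_t^p\|_2^2+\tfrac{p-1}{\lambda}|\alpha|^2\|f_t\|_{2p}^{2p}$, with no $\lambda^{-3}$. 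Replace your Cauchy--Schwarz/Young step by this completion of squares and the lemma follows with the stated $C_B$. (For the eventual Aronson estimate a $\lambda$-dependent $C_B$ would already suffice, so the defect is harmless downstream, but it does not prove the lemma as written.)
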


\begin{proof}We may assume that $\tau=0$ without lose of generality,
so that 
\begin{align*}
\left\Vert f_{t}\right\Vert _{2p}^{2p} & =\int_{\mathbb{R}^{n}}\left(\varint_{\mathbb{R}^{n}}f(\xi)\varGamma^{\psi}(x,t;\xi,0)d\xi\right)^{2p}dx\\
 & =\int_{\mathbb{R}^{n}}\left(\varint_{\mathbb{R}^{n}}f(\xi)e^{-\psi(x)+\psi(\xi)}\varGamma(x,t;\xi,0)d\xi\right)^{2p}dx.
\end{align*}
Differentiating $\left\Vert f_{t}\right\Vert _{2p}^{2p}$ to obtain
\[
\frac{d}{dt}\left\Vert f_{t}\right\Vert _{2p}^{2p}=2p\int_{\mathbb{R}^{n}}f_{t}(x)^{2p-1}\left(\varint_{\mathbb{R}^{n}}f(\xi)e^{-\psi(x)+\psi(\xi)}\frac{\partial}{\partial t}\varGamma(x,t;\xi,0)d\xi\right)dx
\]
and by using the equation (\ref{eq:ga0.1}) we have 
\[
\frac{d}{dt}\left\Vert f_{t}\right\Vert _{2p}^{2p}=2p\int_{\mathbb{R}^{n}}\left[f_{t}(x)^{2p-1}\varint_{\mathbb{R}^{n}}f(\xi)e^{-\psi(x)+\psi(\xi)}\nabla_{x}\cdot\left(A(x,t)\nabla_{x}\varGamma(x,t;\xi,0)\right)d\xi\right]dx.
\]
Similarly, by using the backward equation (\ref{eq:bas02}) we have
\[
\frac{d}{dt}\left\Vert f_{t}^{\dagger}\right\Vert _{2p}^{2p}=2p\int_{\mathbb{R}^{n}}\left[f_{t}^{\dagger}(x)^{2p-1}\varint_{\mathbb{R}^{n}}f(\xi)e^{\psi(x)-\psi(\xi)}\nabla_{x}\cdot\left(A^{T}(x,t)\nabla_{x}\varGamma(\xi,T;x,T-t)\right)d\xi\right]dx.
\]
By using the Fubini theorem, then performing integration by parts
we therefore have 
\begin{align*}
\frac{1}{2p}\frac{d}{dt}\left\Vert f_{t}\right\Vert _{2p}^{2p} & =\int_{\mathbb{R}^{n}}\left(e^{\psi(\xi)}f(\xi)\varint_{\mathbb{R}^{n}}e^{-\psi(x)}f_{t}(x)^{2p-1}\nabla_{x}\cdot\left(A(x,t)\nabla_{x}\varGamma(x,t;\xi,0)\right)dx\right)d\xi\\
 & =\int_{\mathbb{R}^{n}}f_{t}(x)^{2p}\left\langle \nabla\psi,a(x,t)\cdot\nabla\psi\right\rangle dx\\
 & -\frac{2p-1}{p^{2}}\int_{\mathbb{R}^{n}}\left\langle \nabla f_{t}(x)^{p},a(x,t)\cdot\nabla f_{t}(x)^{p}\right\rangle dx\\
 & -\frac{2(p-1)}{p}\int_{\mathbb{R}^{n}}f_{t}(x)^{p}\left\langle \nabla f_{t}(x)^{p},a(x,t)\cdot\nabla\psi\right\rangle dx\\
 & -2\int_{\mathbb{R}^{n}}f_{t}(x)^{p}\left\langle \nabla f_{t}(x)^{p},b(x,t)\cdot\nabla\psi\right\rangle dx\\
 & =I_{1}-I_{2}-I_{3}-I_{4}.
\end{align*}
and similarly we have 
\begin{align*}
\frac{1}{2p}\frac{d}{dt}\left\Vert f_{t}^{\dagger}\right\Vert _{2p}^{2p} & =\int_{\mathbb{R}^{n}}f_{t}^{\dagger}(x)^{2p}\left\langle \nabla\psi,a(x,t)\cdot\nabla\psi\right\rangle dx\\
 & -\frac{2(2p-1)}{p}\int_{\mathbb{R}^{n}}\left\langle \nabla f_{t}^{\dagger}(x)^{p},a(x,t)\cdot\nabla f_{t}^{\dagger}(x)^{p}\right\rangle dx\\
 & +\frac{2(p-1)}{p}\int_{\mathbb{R}^{n}}f_{t}^{\dagger}(x)^{p}\left\langle \nabla\psi,a(x,t)\cdot\nabla f_{t}^{\dagger}(x)^{p}\right\rangle dx\\
 & -2\int_{\mathbb{R}^{n}}f_{t}^{\dagger}(x)^{p}\left\langle \nabla\psi,b(x,t)\cdot\nabla f_{t}^{\dagger}(x)^{p}\right\rangle dx.
\end{align*}
Since $\frac{d}{dt}\left\Vert f_{t}\right\Vert _{2p}^{2p}$ and $\frac{d}{dt}\left\Vert f_{t}^{\dagger}\right\Vert _{2p}^{2p}$
are similar, we only need to prove (\ref{eq: energy inequality 1(2)}).

Each term $I_{j}$ on the right-hand side of (\ref{eq: energy inequality 1(2)})
can be dominated as the following. The first three terms $I_{1}$,
$I_{2}$ and $I_{3}$ can be handled exactly as in B. Davies \cite{Davies-1987}
and D. Stroock \cite{Stroock-1988}. Recall that $\nabla\psi=\alpha$
is a constant vector. Hence 
\begin{equation}
I_{1}\leq\frac{\left|\alpha\right|^{2}}{\lambda}\Vert f_{t}\Vert_{2p}^{2p}.\label{eq:est1}
\end{equation}
While for $I_{2}$ and $I_{3}$, by completing squares we first rewrite
the terms of $I_{3}+I_{2}$ as following 
\begin{align*}
-I_{2}-I_{3} & =-\frac{2p-1}{p^{2}}\int_{\mathbb{R}^{n}}\left\langle \nabla f_{t}(x)^{p},a(x,t)\cdot\nabla f_{t}(x)^{p}\right\rangle dx\\
 & -2\frac{p-1}{p}\int_{\mathbb{R}^{n}}f_{t}(x)^{p}\left\langle \nabla f_{t}(x)^{p},a(x,t)\cdot\alpha\right\rangle dx\\
 & =-\frac{1}{p}\int_{\mathbb{R}^{n}}\left\langle \nabla f_{t}(x)^{p},a(x,t)\cdot\nabla f_{t}(x)^{p}\right\rangle dx+(p-1)\int_{\mathbb{R}^{n}}f_{t}(x)^{2p}\left\langle \alpha,a(x,t)\cdot\alpha\right\rangle dx\\
 & -\frac{p-1}{p^{2}}\int_{\mathbb{R}^{n}}\left\langle \left(\nabla f_{t}(x)^{p}-pf_{t}(x)^{p}\alpha\right),a(x,t)\cdot\left(\nabla f_{t}(x)^{p}-pf_{t}(x)^{p}\alpha\right)\right\rangle dx.
\end{align*}
The last term on the right-hand side is non-positive as $a(x,t)$
is positive definite, so by using inequalities
\[
\left\langle \nabla f_{t}(x)^{p},a(x,t)\cdot\nabla f_{t}(x)^{p}\right\rangle \geq\lambda\left|\nabla f_{t}(x)^{p}\right|^{2}
\]
and 
\[
\left\langle \alpha,a(x,t)\cdot\alpha\right\rangle \leq\frac{1}{\lambda}\left|\alpha\right|^{2}
\]
we deduce that 
\begin{equation}
-I_{2}-I_{3}\leq-\frac{\lambda}{p}\left\Vert \nabla f_{t}^{p}\right\Vert _{2}^{2}+\frac{p-1}{\lambda}\left|\alpha\right|^{2}\left\Vert f_{t}\right\Vert _{2p}^{2p}.\label{eq:est2}
\end{equation}

The main innovation in our proof is the handling of the skew-symmetric
part $I_{4}$ which does not appear in the symmetric case. The idea
is to apply the compensated estimate, Proposition \ref{the: compensated compactness 2}
, to obtain 
\begin{align}
\left|I_{4}\right| & =\left|2\int_{\mathbb{R}^{n}}f_{t}(x)^{p}\left\langle \nabla f_{t}(x)^{p},b(x,t)\cdot\alpha\right\rangle dx\right|\nonumber \\
 & \leq C\left\Vert b\right\Vert _{L^{\infty}\left(\textrm{BMO}\right)}\left|\alpha\right|\left\Vert f_{t}^{p}\right\Vert _{2}\left\Vert \nabla f_{t}^{p}\right\Vert _{2}\label{eq:est3}
\end{align}
where $C$ is a constant depending only on $n$. Therefore 
\begin{equation}
\left|I_{4}\right|\leq\frac{\lambda}{2p}\left\Vert \nabla f_{t}^{p}\right\Vert _{2}^{2}+C\left\Vert b\right\Vert _{L^{\infty}\left(\textrm{BMO}\right)}^{2}\frac{p}{\lambda}\left|\alpha\right|^{2}\left\Vert f_{t}\right\Vert _{2p}^{2p}.\label{eq:est4}
\end{equation}
Putting these estimates together we thus obtain (\ref{eq: energy inequality 1(2)}).\end{proof}

Now we can follow arguments in D. W. Stroock \cite{Stroock-1988}
to obtain the upper bound, yet again by using the special feature
of our elliptic operator. We contain the major steps only for completeness.

First we can prove the following by exactly the same argument in D.
W. Stroock \cite{Stroock-1988}.

\begin{lemma} There is a constant $C>0$ depending only on $n$ and
the $L_{t}^{\infty}(\textrm{BMO})$-norm of the skew-symmetric part
of $\left(A^{ij}(x,t)\right)$ such that 
\begin{equation}
\left\Vert \varGamma_{\tau,t}^{\psi}f\right\Vert _{\infty}\leq\frac{C}{(t-\tau)^{n/4}}e^{\frac{C\vert\alpha\vert^{2}(t-\tau)}{\lambda}}\Vert f\Vert_{2}\label{eq:dom01}
\end{equation}
and 
\begin{equation}
\left\Vert \varGamma_{\tau,t}^{\psi\dagger}f\right\Vert _{\infty}\leq\frac{C}{(t-\tau)^{n/4}}e^{\frac{C\vert\alpha\vert^{2}(t-\tau)}{\lambda}}\Vert f\Vert_{2}\label{eq:dom01-1}
\end{equation}
for every $f\in L^{2}\left(\mathbb{R}^{n}\right)$ , $0\leq\tau<t$
and $\alpha\in\mathbb{R}^{n}$, where $\psi(x)=\alpha\cdot x$. \end{lemma}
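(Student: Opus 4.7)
The plan is the standard Nash--Moser iteration adapted to the semigroup $\varGamma_{\tau,t}^{\psi}$, using the differential energy inequality \eqref{eq: energy inequality 1(2)} already established together with the Nash inequality \eqref{eq:nash1} and the abstract iteration Lemma~\ref{lem: inequality 1(2)} quoted from Stroock. Since the skew-symmetric term has already been absorbed into the constant $C_{B}$ in \eqref{eq: energy inequality 1(2)}, once we are in a position to invoke the Nash inequality the remainder of the argument is formally identical to the classical one, and the same scheme applies verbatim to $f_{t}^{\dagger}$ via \eqref{eq: energy inequality 1(2)-1}.

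First, I would apply the Nash inequality to $u=f_{t}^{p}$, which gives
\[
\left\Vert f_{t}\right\Vert _{2p}^{2p(1+2/n)}\;\leq\;C_{n}\left\Vert \nabla f_{t}^{p}\right\Vert _{2}^{2}\left\Vert f_{t}\right\Vert _{p}^{4p/n},
\]
so that
\[
\left\Vert \nabla f_{t}^{p}\right\Vert _{2}^{2}\;\geq\;\frac{1}{C_{n}}\cdot\frac{\left\Vert f_{t}\right\Vert _{2p}^{2p(1+2/n)}}{\left\Vert f_{t}\right\Vert _{p}^{4p/n}}.
\]
Plugging this into \eqref{eq: energy inequality 1(2)} with $u(t)=\Vert f_{t}\Vert_{2p}^{2p}$, $w(t)=\Vert f_{t}\Vert_{p}^{2p}$ (which is non-decreasing in a suitable sense after a trivial monotonisation of the majorant, since $\Vert\varGamma_{\tau,s}^{\psi}f\Vert_{p}$ can be controlled by its supremum over $[\tau,t]$), and $\beta=2/n$, I obtain a differential inequality of exactly the form required by Lemma~\ref{lem: inequality 1(2)}, with $c_{1}=\lambda/C_{n}$ and $c_{2}=C_{B}|\alpha|^{2}/\lambda$.

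Second, I would apply Lemma~\ref{lem: inequality 1(2)} with $\delta=1$ to conclude that, for each $p\geq 1$,
\[
(t-\tau)^{(p-1)n/(2p)}\left\Vert \varGamma_{\tau,t}^{\psi}f\right\Vert _{2p}^{2p}\;\leq\;K(n)\,p^{2/\beta}\,\exp\!\Bigl(\tfrac{C_{B}|\alpha|^{2}(t-\tau)}{\lambda}\Bigr)\sup_{\tau\le s\le t}\left\Vert \varGamma_{\tau,s}^{\psi}f\right\Vert _{p}^{2p}.
\]
Third, I would iterate this over the dyadic sequence $p_{k}=2^{k}$, $k=0,1,2,\ldots$, starting from $\Vert f\Vert_{2}$. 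At each step the power on $(t-\tau)$ and the constant are updated by a fixed geometric rule: the cumulative exponent on $(t-\tau)^{-1}$ telescopes to $n/4$ (because $\sum_{k\ge 0}\tfrac{n}{2p_{k}}(1-\tfrac{1}{p_{k+1}})=n/4$), the product of the polynomial factors $p_{k}^{2/\beta}$ taken to successively smaller powers gives a finite constant $C(n)$, and the exponential factors multiply to give $\exp(C|\alpha|^{2}(t-\tau)/\lambda)$ after summing the geometric series $\sum_{k}2^{-k}$. Letting $k\to\infty$ yields \eqref{eq:dom01}.

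Finally, the bound \eqref{eq:dom01-1} for $\varGamma_{\tau,t}^{\psi\dagger}$ follows by running the same iteration on $f_{t}^{\dagger}$ using \eqref{eq: energy inequality 1(2)-1}, or alternatively by duality from $\langle\varGamma_{\tau,t}^{\psi}f,g\rangle=\langle f,\varGamma_{\tau,t}^{\psi\dagger}g\rangle$. I do not expect any genuine obstacle here: the non-symmetric skew-symmetric part $b$ has already been dealt with once and for all in the derivation of \eqref{eq: energy inequality 1(2)} via the compensated compactness Proposition~\ref{the: compensated compactness 2}, so the remainder is a purely bookkeeping exercise of the Nash iteration, and the main care needed is in verifying that the geometric sums of exponents and constants across the dyadic scales converge to exactly the advertised $(t-\tau)^{-n/4}$ and the advertised exponential factor.
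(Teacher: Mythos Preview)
Your approach is essentially the same as the paper's: apply Nash's inequality \eqref{eq:nash1} to $f_{t}^{p}$, feed the resulting differential inequality into Stroock's iteration Lemma~\ref{lem: inequality 1(2)}, and iterate over the dyadic scale $p=2^{k}$. One bookkeeping point to watch: your choices $u(t)=\Vert f_{t}\Vert_{2p}^{2p}$, $w(t)=\Vert f_{t}\Vert_{p}^{2p}$, $\beta=2/n$ do not literally match the hypotheses of Lemma~\ref{lem: inequality 1(2)} (with these, the $u$-exponent in the inequality is $1+2/n$, not $1+\beta p$, and the factor $t^{p-2}$ required by the lemma is absent); the paper instead sets $u_{p}(t)=\Vert f_{t}\Vert_{2p}$, $\beta=4/n$, and $w_{p}(t)=\sup_{0\le s\le t}s^{\,n(p-2)/4p}\Vert f_{s}\Vert_{p}$, which absorbs the $t^{p-2}$ and makes the lemma apply verbatim, after which the iteration and the summation of exponents proceed exactly as you describe.
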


\begin{proof} We only need to prove (\ref{eq:dom01}) for the case
that $0=\tau<t$. The proof of (\ref{eq:dom01-1}) is similar, and
uses the inequality (\ref{eq: energy inequality 1(2)-1}) instead
and uses the fact that the constant appears in that inequality is
independent of any $T>0$.

To show (\ref{eq:dom01}), applying Nash's inequality (\ref{eq:nash1})
to the first term on the right-hand side of (\ref{eq: energy inequality 1(2)})
to deduce that 
\begin{equation}
\frac{d}{dt}\Vert f_{t}\Vert_{2p}\leq-\frac{\lambda}{2pC_{n}}\frac{\Vert f_{t}\Vert_{2p}^{1+4p/n}}{\Vert f_{t}\Vert_{p}^{4p/n}}+C_{B}\frac{\vert\alpha\vert^{2}}{\lambda}p\Vert f_{t}\Vert_{2p}\label{eq: energy inequality 2(2)}
\end{equation}
for every $p>1$. Let $u_{p}(t)=\Vert f_{t}\Vert_{2p}$ and $w_{p}(t)=\sup_{0\leq s\leq t}s^{n(p-2)/4p}u_{p/2}(s)$.
Then \eqref{eq: energy inequality 2(2)} may be written as 
\[
u_{p}'(t)\leq-\frac{\lambda}{2pC_{n}}\frac{t^{p-2}u_{p}(t)^{1+4p/n}}{(w_{p}(t))^{4p/n}}+C\frac{\vert\alpha\vert^{2}}{\lambda}pu_{p}(t)
\]
so that, according to Lemma \ref{lem: inequality 1(2)}, we have 
\begin{eqnarray*}
w_{2p}(t) & = & \sup_{0\leq s\leq t}s^{n(p-1)/4p}u_{p}(s)\\
 &  & \leq\sup_{0\leq s\leq t}(\frac{Kp^{2}}{\delta})^{\frac{n}{4p}}\exp(\frac{C\vert\alpha\vert^{2}\delta s}{p\lambda})w_{p}(s)\\
 & = & (\frac{Kp^{2}}{\delta})^{\frac{n}{4p}}\exp(\frac{C\vert\alpha\vert^{2}\delta t}{p\lambda})w_{p}(t).
\end{eqnarray*}
According to \eqref{eq: energy inequality 2(2)}, if take $p=1$,
we have 
\[
w_{2}(t)=\sup_{0\leq s\leq t}\Vert f_{s}\Vert_{2}\leq e^{C\vert\alpha\vert^{2}t/\lambda}\Vert f\Vert_{2}.
\]
Now we set $\delta=1$ and iterate it to get 
\[
w_{2^{m}}(t)\leq C\exp\left(\frac{C\vert\alpha\vert^{2}t}{\lambda}\right)w_{2}(t)\leq C\exp\left(\frac{C\vert\alpha\vert^{2}t}{\lambda}\right)\Vert f\Vert_{2}
\]
Letting $m\rightarrow\infty$, we therefore obtain that 
\[
\Vert f_{t}\Vert_{\infty}\leq\frac{C}{t^{n/4}}\exp\left(\frac{C\vert\alpha\vert^{2}t}{\lambda}\right)\Vert f\Vert_{2}
\]
which completes the proof. \end{proof}

\emph{Proof of the upper bound (\ref{eq:aron1-1}). }Let us use the
same notations as in the proof of the above lemma. By \eqref{eq:dom01-1}
and the fact that $\varGamma_{\tau,t}^{\psi\dagger}$ is the adjoint
operator of $\varGamma_{\tau,t}^{\psi}$, we have 
\[
\Vert f_{t}\Vert_{2}\leq\frac{C}{t^{n/4}}\exp\left(\frac{C\vert\alpha\vert^{2}t}{\lambda}\right)\Vert f\Vert_{1}.
\]
Since $\varGamma_{0,2t}^{\psi}=\varGamma_{t,2t}^{\psi}\circ\varGamma_{0,t}^{\psi}$,
we thus deduce that 
\[
\Vert f_{2t}\Vert_{\infty}\leq\frac{C}{t^{n/2}}\exp\left(\frac{C\vert\alpha\vert^{2}t}{\lambda}\right)\Vert f\Vert_{1},
\]
which is equivalent to 
\[
\varGamma(x,2t;\xi,0)\leq\frac{C}{t^{n/2}}\exp\left[\frac{C\vert\alpha\vert^{2}t}{\lambda}+\alpha\cdot(\xi-x)\right].
\]
Letting $\alpha=\frac{\lambda}{2Ct}(x-\xi)$ and adjusting $2t$ to
$t$ and $0$ to $\tau$, we therefore derive the upper bound 
\[
\varGamma(x,t;\xi,\tau)\leq\frac{C}{(t-\tau)^{\frac{n}{2}}}\exp\left(-\frac{\vert x-\xi\vert^{2}}{C(t-\tau)}\right)
\]
for any $t>\tau\geq0$. This completes the proof of the upper bound.

\subsection{Proof of the lower bound}

In this part, we prove the lower bound, 
\begin{equation}
\varGamma(x,t;\xi,\tau)\geq\frac{1}{C(t-\tau)^{\frac{n}{2}}}\exp\left[-\frac{C|x-\xi|^{2}}{t-\tau}\right]\label{eq:aron1-2}
\end{equation}
following the idea due to J. Nash \cite{Nash-1958}, where $C$ depends
only on $n$, $\lambda$ and $\left\Vert b\right\Vert _{L^{\infty}(\textrm{BMO})}$.

According to Nash's arguments, the lower bound is local in nature,
and follows easily from the following

\begin{lemma} There is a constant $C_{0}>0$ depending only on the
dimension $n$, $\lambda>0$ and the $L^{\infty}(\textrm{BMO})$ norm
of $\left(b^{ij}\right)$, such that

\begin{equation}
\int_{\mathbb{R}^{n}}\ln(\varGamma(x,1;\xi,0))\;\mu(d\xi)\geq-C_{0},\qquad\forall x\in B(0,2)\label{eq: lowerbound step BMO 1}
\end{equation}
and 
\begin{equation}
\varGamma(x,2;\xi,0)\geq e^{-2C_{0}},\qquad x,\xi\in B(0,2)\label{eq: lowerbound step BMO 2}
\end{equation}
where $\mu$ denotes the standard Gaussian measure in $\mathbb{R}^{n}$,
i.e. 
\[
\mu\left(d\xi\right)=\mu\left(\xi\right)d\xi\qquad\textrm{ where }\mu\left(\xi\right)=\frac{1}{(2\pi)^{n/2}}e^{-\frac{|\xi|^{2}}{2}}.
\]
\end{lemma}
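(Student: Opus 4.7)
The plan is to follow J. Nash's G-function strategy, with the compensated compactness bound \eqref{eq:c-c-e} of Proposition~\ref{the: compensated compactness} supplying the key estimate on the skew-symmetric part $b$. I would prove the integrated log-estimate \eqref{eq: lowerbound step BMO 1} first, and then derive the pointwise lower bound \eqref{eq: lowerbound step BMO 2} as a short Chapman--Kolmogorov corollary.

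For \eqref{eq: lowerbound step BMO 1}, use the adjoint identity $\varGamma(x,1;\xi,0) = \varGamma^{\star}(\xi,0;x,1)$ together with the time reversal $\tilde t = 1-\tau$ to reduce the statement to a forward G-bound for the adjoint operator, whose ellipticity constant and BMO norm of the skew part are unchanged (since $b^{\top}=-b$). After this harmless renaming, it suffices to prove: for any $x\in B(0,2)$ and $u(z,t)=\varGamma(z,t;x,0)$ the forward fundamental solution starting from $\delta_x$,
\[
G(t) := \int_{\mathbb{R}^n} \log u(z,t) \, \mu(dz) \geq -C_0 \quad \text{at } t=1.
\]
Setting $w = \log u$ and differentiating $G$ using the forward equation \eqref{eq:ga0.1} together with $\nabla_z\mu = -z\mu$, one integration by parts in $z$ gives
\[
G'(t) = \int \mu \, \langle \nabla w, a \nabla w\rangle\, dz + \int z\cdot A \nabla w\, \mu\,dz,
\]
where the skew part of $A$ drops from the quadratic form, so the first term is $\geq \lambda\int \mu|\nabla w|^2 dz$. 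The cross term splits: its symmetric piece is controlled by Cauchy--Schwarz and absorbed into a fraction of the quadratic form, while its skew piece $\int z\cdot b \nabla w\, \mu\,dz = -\int \nabla \mu \cdot b \nabla w\,dz$ is handled by first mollifying $b$ as in Proposition~\ref{prop-bmo} (which preserves the BMO norm by \eqref{eq:non-de-01}) and truncating $w$ at large negative values so that the relevant gradients are $L^2$-integrable. The compensated compactness bound \eqref{eq:c-c-e} then yields
\[
\Bigl|\int \nabla\mu\cdot b \nabla w\,dz\Bigr| \leq C\,\|b\|_{L^{\infty}(\textrm{BMO})}\,\|\nabla\mu\|_2\,\|\nabla w\|_2,
\]
which, combined with Young and the Gaussian tail of $\mu$, absorbs into the diffusion term. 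This produces a differential inequality $G'(t)\geq c\int \mu|\nabla w|^2 dz - C$ with $c>0$ and $C$ depending only on $n$, $\lambda$, and $\|b\|_{L^\infty(\textrm{BMO})}$. Integrating from a suitable auxiliary time $t_0\in(0,1)$ up to $t=1$, and combining with a lower bound on $G(t_0)$ obtained from the already-proved Aronson upper bound and a Nash-type inequality applied to $\sqrt u$ (using mass conservation $\int u\,dz=1$), yields \eqref{eq: lowerbound step BMO 1}.

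With \eqref{eq: lowerbound step BMO 1} in hand, \eqref{eq: lowerbound step BMO 2} is a soft consequence. Apply \eqref{eq: lowerbound step BMO 1} both to the flow from time $0$ to time $1$ and to the flow from time $1$ to time $2$: for $x,\xi\in B(0,2)$,
\[
\int \log \varGamma(x,2;z,1)\,\mu(dz) \geq -C_0, \qquad \int \log \varGamma(z,1;\xi,0)\,\mu(dz) \geq -C_0.
\]
Adding and invoking Jensen's inequality with the probability measure $\mu$ yields $\log \int \varGamma(x,2;z,1)\varGamma(z,1;\xi,0)\mu(dz) \geq -2C_0$; combining with $\mu(z)\leq(2\pi)^{-n/2}$ and Chapman--Kolmogorov \eqref{eq:CK-01} gives
\[
\varGamma(x,2;\xi,0) \geq (2\pi)^{n/2}\!\int \varGamma(x,2;z,1)\varGamma(z,1;\xi,0)\,\mu(dz) \geq (2\pi)^{n/2} e^{-2C_0},
\]
which is \eqref{eq: lowerbound step BMO 2} up to a cosmetic redefinition of $C_0$. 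The principal obstacle is the absorption of the skew cross term into the \emph{weighted} diffusion term, because \eqref{eq:c-c-e} naturally produces the unweighted $L^2$-norm of $\nabla w$ while the diffusion term carries the Gaussian weight $\mu$; reconciling this mismatch requires careful truncation/localization together with the mollification $b_\epsilon$ from Proposition~\ref{prop-bmo} so that the final constants depend only on $n$, $\lambda$, and $\|b\|_{L^\infty(\textrm{BMO})}$.
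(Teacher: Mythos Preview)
Your overall strategy is correct and matches the paper's: Nash's $G$-function, the compensated compactness estimate \eqref{eq:c-c-e} for the skew term, and Chapman--Kolmogorov plus Jensen for \eqref{eq: lowerbound step BMO 2}. However, two genuine gaps remain.

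\textbf{The weight mismatch.} You correctly identify that \eqref{eq:c-c-e} applied to $\int\nabla\mu\cdot b\nabla w\,dz$ produces the \emph{unweighted} norm $\|\nabla w\|_{2}$, whereas the good diffusion term is the \emph{weighted} $\int\mu|\nabla w|^{2}\,dz$. Since $\mu\leq(2\pi)^{-n/2}$, the weighted norm is the smaller one, so Young's inequality cannot close the estimate; your proposed fix by ``truncation/localization and mollification'' does not address this structural mismatch. The paper resolves it by the algebraic identity
\[
\langle\nabla\mu,\,b\cdot\nabla w\rangle=\frac{1}{\delta}\,\langle\nabla\mu^{\delta},\,b\cdot\nabla(\mu^{1-\delta}w)\rangle,
\]
valid for skew-symmetric $b$ because $\langle\nabla\mu^{\delta},b\cdot\nabla\mu^{1-\delta}\rangle=0$. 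Applying \eqref{eq:c-c-e} to the right-hand side with $\delta\in(0,\tfrac12)$ gives $\|\nabla\mu^{\delta}\|_{2}<\infty$ and, since $\mu^{\frac12-\delta}$ and $\mu^{\frac12-\delta}\nabla\log\mu$ are bounded, yields
\[
\bigl\|\nabla(\mu^{1-\delta}w)\bigr\|_{2}\leq C\bigl(\|w\|_{L^{2}(\mu)}+\|\nabla w\|_{L^{2}(\mu)}\bigr).
\]
Now the skew term is controlled by the \emph{weighted} norms and can be absorbed. This $\mu^{\delta}/\mu^{1-\delta}$ splitting is the missing idea.

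\textbf{Closing the differential inequality.} From $G'(t)\geq c\int\mu|\nabla w|^{2}\,dz-C$ you propose to integrate from $t_{0}$ to $1$ and supply a lower bound on $G(t_{0})$ ``from the Aronson upper bound and a Nash-type inequality applied to $\sqrt{u}$''. This does not work: the upper bound controls $\log u$ only from above, and no Nash-type inequality on $\sqrt{u}$ yields a lower bound on $\int\log u\,d\mu$ at a single time. The paper instead uses the Gaussian Poincar\'e inequality to bound $\|\nabla w\|_{L^{2}(\mu)}^{2}\geq\tfrac12\|w-G(t)\|_{L^{2}(\mu)}^{2}$, and then the already-proved upper bound on $\varGamma$ to show that $\{\log\varGamma\geq-K\}$ has $\mu$-measure bounded below by some $\kappa>0$ for $t\in[\tfrac12,1]$, which gives $\|w-G(t)\|_{L^{2}(\mu)}^{2}\geq\tfrac{\kappa}{2}G(t)^{2}-K^{2}$. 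Combined with the extra $-C\|w\|_{L^{2}(\mu)}$ coming from the skew estimate above, one arrives at the closed ODE inequality
\[
G'(t)\geq -C_{1}+C_{2}\,G(t)^{2}\qquad\text{on }[\tfrac12,1],
\]
with $G\leq0$. This Riccati-type inequality forces $G(1)\geq-C_{0}$ regardless of the value of $G(\tfrac12)$; no separate lower bound at an initial time is needed. Your argument lacks this feedback mechanism and therefore does not close.

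Your derivation of \eqref{eq: lowerbound step BMO 2} from \eqref{eq: lowerbound step BMO 1} via Chapman--Kolmogorov and Jensen is essentially the paper's argument and is fine.
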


\begin{proof} The proof follow the same ideas as in J. Nash, as explained
in D. W. Stroock \cite{Nash-1958}. We have to overcome difficulties
arising from the additional non-symmetric part $b(x,t)=\left(b^{ij}(x,t)\right)$.
The idea is to consider for any $x\in B(0,2)$ the following function
\[
G(t)=\int_{\mathbb{R}^{n}}\ln(\varGamma(x,1;\xi,1-t))\;\mu(d\xi)
\]
where $t\in(0,1]$ and $x\in B(0,2)$. Since $\left(A^{ij}\right)$
is uniformly elliptic with bounded derivatives, $\varGamma(x,t;\xi,\tau)$
is a probability density in $x$ (when others are fixed) and also
in $\xi$ (as other variables are fixed). Hence $\int_{\mathbb{R}^{n}}\varGamma(x,t;\xi,0)\;d\xi=1$
for every $t\in(0,1]$. According to Jensen's inequality $G(t)\leq0$
and what we want to show is that $G(1)$ is bounded from below uniformly
in $x\in B(0,2)$. To this end we consider the derivative of $G$.
By a simple calculation with integration by parts we obtain 
\begin{align}
G'(t) & =\int_{\mathbb{R}^{n}}\left\langle \xi,a(\xi,1-t)\cdot\nabla_{\xi}\ln\varGamma(x,t;\xi,1-t)\right\rangle \mu(d\xi)\nonumber \\
 & +\int_{\mathbb{R}^{n}}\left\langle \nabla_{\xi}\ln\varGamma(x,1;\xi,1-t),a(\xi,1-t)\cdot\nabla_{\xi}\ln\varGamma(x,t;\xi,1-t)\right\rangle \mu(d\xi)\label{eq:gd1.1}\\
 & +\frac{1}{\delta}\int_{\mathbb{R}^{n}}\left\langle \nabla\mu\left(\xi\right)^{\delta},b(\xi,1-t)\cdot\nabla_{\xi}\left(\mu(\xi)^{1-\delta}\ln\varGamma(x,t;\xi,1-t)\right)\right\rangle d\xi\nonumber 
\end{align}
for any $\delta\in(0,1)$. We have used the facts that $\nabla\ln\mu\left(\xi\right)=-2\xi$,
the backward equation for the fundamental solution $\varGamma(x,t;\xi,\tau)$
and the following fact that 
\[
\left\langle \nabla\mu,b\cdot\nabla\ln\varGamma\right\rangle =\frac{1}{\delta}\left\langle \nabla\mu^{\delta},b\cdot\nabla\left(\mu^{1-\delta}\ln\varGamma\right)\right\rangle 
\]
as $b$ is skew-symmetric. Using the Cauchy-Schwartz inequality and
the compensated compactness inequality (\ref{eq:c-c-e}) we deduce
that 
\begin{align*}
G'(t) & \geq-\frac{1}{\varepsilon}+\left(1-\varepsilon\right)\int_{\mathbb{R}^{n}}\left\langle \nabla_{\xi}\ln\varGamma(x,1;\xi,1-t),a(\xi,1-t)\cdot\nabla_{\xi}\ln\varGamma(x,t;\xi,1-t)\right\rangle \mu(d\xi)\\
 & -\frac{C}{\delta}\left\Vert b\right\Vert _{\textrm{BMO}}\left\Vert \nabla\mu^{\delta}\right\Vert _{2}\left\Vert \nabla_{\xi}\left(\mu^{1-\delta}\ln\varGamma(x,t;\cdot,1-t)\right)\right\Vert _{2}\\
 & \geq-\frac{1}{\varepsilon}+\left(1-\varepsilon\right)\lambda\left\Vert \nabla\ln\varGamma(x,1;\cdot,1-t)\right\Vert _{L^{2}\left(\mu\right)}\\
 & -\frac{C}{\delta}\left\Vert b\right\Vert _{\textrm{BMO}}\left\Vert \nabla\mu^{\delta}\right\Vert _{2}\left\Vert \nabla\left(\mu^{1-\delta}\ln\varGamma(x,t;\cdot,1-t)\right)\right\Vert _{2}
\end{align*}
for any $\varepsilon,\delta\in(0,1)$. Choose $\delta\in(0,\frac{1}{2})$.
Then 
\[
\left\Vert \nabla\mu^{\delta}\right\Vert _{2}=\delta\left\Vert \mu^{\delta}\nabla\ln\mu\right\Vert _{2}<\infty.
\]
Moreover, for $\delta\in(0,\frac{1}{2})$, 
\[
\sup_{\xi}\left|\mu(\xi)^{\frac{1}{2}-\delta}\nabla\ln\mu(\xi)\right|<\infty
\]
and 
\[
\sup_{\xi}\left|\mu(\xi)^{\frac{1}{2}-\delta}\right|<\infty,
\]
so that 
\begin{align*}
\left\Vert \nabla\left(\mu^{1-\delta}\ln\varGamma(x,t;\cdot,1-t)\right)\right\Vert _{2} & \leq\left\Vert (\nabla\mu^{1-\delta})\ln\varGamma(x,t;\cdot,1-t)\right\Vert _{2}\\
 & +\left\Vert \mu^{1-\delta}\nabla\ln\varGamma(x,t;\cdot,1-t)\right\Vert _{2}\\
 & =(1-\delta)\left\Vert (\mu^{\frac{1}{2}-\delta}\nabla\ln\mu)\ln\varGamma(x,t;\cdot,1-t)\right\Vert _{L^{2}\left(\mu\right)}\\
 & +\left\Vert \mu^{\frac{1}{2}-\delta}\nabla\ln\varGamma(x,t;\cdot,1-t)\right\Vert _{L^{2}(\mu)}\\
 & \leq C\left\{ \left\Vert \ln\varGamma(x,t;\cdot,1-t)\right\Vert _{L^{2}\left(\mu\right)}+\left\Vert \nabla\ln\varGamma(x,t;\cdot,1-t)\right\Vert _{L^{2}(\mu)}\right\} 
\end{align*}
for some constant $C$ depending only on $n$ and $\delta\in(0,\frac{1}{2})$.
By substituting this estimate into the inequality for $G'$ we obtain
\begin{align*}
G'(t) & \geq-\frac{1}{\varepsilon}+\left(1-\varepsilon\right)\lambda\left\Vert \nabla\ln\varGamma(x,1;\cdot,1-t)\right\Vert _{L^{2}\left(\mu\right)}^{2}\\
 & -C\left\Vert b\right\Vert _{\textrm{BMO}}\left\{ \left\Vert \ln\varGamma(x,t;\cdot,1-t)\right\Vert _{L^{2}\left(\mu\right)}+\left\Vert \nabla\ln\varGamma(x,t;\cdot,1-t)\right\Vert _{L^{2}(\mu)}\right\} \\
 & \geq-\frac{1}{\varepsilon}-\frac{1}{4\lambda^{2}\varepsilon^{2}}C^{2}\left\Vert b\right\Vert _{\textrm{BMO}}^{2}+\left(1-2\varepsilon\right)\lambda\left\Vert \nabla\ln\varGamma(x,1;\cdot,1-t)\right\Vert _{L^{2}\left(\mu\right)}^{2}\\
 & -C\left\Vert b\right\Vert _{\textrm{BMO}}\left\Vert \ln\varGamma(x,t;\cdot,1-t)\right\Vert _{L^{2}\left(\mu\right)}
\end{align*}
for any $\varepsilon\in(0,\frac{1}{2})$. By choosing $\varepsilon=1/3$,
we thus have the following differential inequality: 
\begin{eqnarray}
 & G'(t)\geq-C+\frac{\lambda}{3}\left\Vert \nabla\ln\varGamma(x,1;\cdot,1-t)\right\Vert _{L^{2}\left(\mu\right)}^{2}-C\left\Vert \ln\varGamma(x,t;\cdot,1-t)\right\Vert _{L^{2}\left(\mu\right)}\label{eq:geq0.1}
\end{eqnarray}
for all $t\in(0,1)$, for some constant $C>0$ depending only on $n$
and the $L^{\infty}\left(\textrm{BMO}\right)$ norm of the skew-symmetric
part $b(x,t)$.

The remaining arguments of the proof are more or less the same as
D. W. Stroock \cite{Stroock-1988}. Firstly, by the Poincaré inequality
for the Gaussian measure, we obtain 
\[
J(x,t)\equiv\left\Vert \ln\varGamma(x,1;\cdot,1-t)-G(t)\right\Vert _{L^{2}\left(\mu\right)}^{2}\leq2\left\Vert \nabla\ln\varGamma(x,1;\cdot,1-t)\right\Vert _{L^{2}\left(\mu\right)}^{2}.
\]
On the other hand, since $G(t)<0$, we have 
\begin{align*}
J(x,t) & =\left\Vert \ln\varGamma(x,1;\cdot,1-t)-G(t)\right\Vert _{L^{2}\left(\mu\right)}^{2}\\
 & =\int_{\mathbb{R}^{n}}\left(\ln\varGamma(x,1;\xi,1-t)-G(t)\right)^{2}\mu(d\xi)\\
 & \geq\int_{\left\{ \ln\varGamma(x,1;\xi,1-t)\geq-K\right\} }\left(\ln\varGamma(x,1;\xi,1-t)-G(t)\right)^{2}\mu(d\xi)\\
 & \geq\int_{\left\{ \ln\varGamma(x,1;\xi,1-t)\geq-K\right\} }\left(\ln\varGamma(x,1;\xi,1-t)+K-G(t)-K\right)^{2}\mu(d\xi)\\
 & \geq\frac{1}{2}\int_{\left\{ \ln\varGamma(x,1;\xi,1-t)\geq-K\right\} }\left(\ln\varGamma(x,1;\xi,1-t)+K-G(t)\right)^{2}\mu(d\xi)-K^{2}\\
 & \geq\frac{1}{2}\int_{\left\{ \ln\varGamma(x,1;\xi,1-t)\geq-K\right\} }G(t)^{2}\mu(d\xi)-K^{2}\\
 & =\frac{1}{2}G(t)^{2}\mu\left\{ \xi\in\mathbb{R}^{n}:\ln\varGamma(x,1;\xi,1-t)\geq-K\right\} -K^{2}.
\end{align*}
According to the upper bound 
\[
\varGamma(x,1;\xi,1-t)\leq\frac{C}{t^{n/2}}\exp\left[-\frac{|x-\xi|^{2}}{Ct}\right],
\]
we have 
\[
\ln\varGamma(x,1;\xi,1-t)\leq\ln C-\frac{n}{2}\ln t-\frac{|x-\xi|^{2}}{Ct}
\]
for $x\in B(0,2)$ and $t\in(\frac{1}{2},1)$. Hence
\begin{align*}
\int_{\left\{ |\xi|>r\right\} }\varGamma(x,1;\xi,1-t)d\xi & \leq\int_{\left\{ |\xi|>r\right\} }\frac{C}{t^{n/2}}\exp\left[-\frac{|x-\xi|^{2}}{Ct}\right]d\xi\\
 & =C_{1}\mu\left[\left|\sqrt{\frac{C}{2}t}\xi+x\right|>r\right]\\
 & \leq C_{1}\mu\left[\left|\xi\right|>\frac{r-2}{\sqrt{\frac{C}{2}t}}\right]\leq C_{1}\mu\left[\left|\xi\right|>\frac{r-2}{\sqrt{\frac{C}{2}}}\right]
\end{align*}
and therefore, there is a positive number $R$ depending on $C$ such
that for any $r>R$ 
\[
\int_{\left\{ |\xi|>r\right\} }\varGamma(x,1;\xi,1-t)\;d\xi<\frac{1}{4},\qquad\mbox{ for all }t\in(0,1],x\in B(0,2).
\]
Thus for any $t\in[\frac{1}{2},1]$, there is $M$ such that $\varGamma(x,1;\xi,1-t)\leq M$,
so that 
\begin{align*}
\frac{3}{4} & \leq\int_{B(0,r)}\varGamma(x,1;\xi,1-t)\;d\xi\leq\vert B(0,r)\vert e^{-K}\\
 & +(2\pi)^{n/2}Me^{r^{2}/2}\mu\left\{ \xi\in\mathbb{R}^{n}:\ln\varGamma(x,1;\xi,1-t)\geq-K\right\} .
\end{align*}
Choose $K>0$ such that $|B(0,r)|e^{-K}=\frac{1}{4}$, to obtain 
\begin{align*}
\mu\left\{ \xi\in\mathbb{R}^{n}:\ln\varGamma(x,1;\xi,1-t)\geq-K\right\}  & \geq\frac{1}{2(2\pi)^{n/2}Me^{r^{2}/2}}\\
 & \equiv\kappa(r)>0.
\end{align*}
By using this estimate we deduce that 
\begin{align*}
G'(t) & \geq-C+\frac{\lambda}{6}J(x,t)-C\left\Vert \ln\varGamma(x,t;\cdot,1-t)\right\Vert _{L^{2}\left(\mu\right)}\\
 & \geq-C+\frac{\lambda}{6}J(x,t)-C\left[\sqrt{J(x,t)}+|G(t)|\right]\\
 & \geq-C(\varepsilon,\lambda)+\frac{\lambda}{12}J(x,t)-\varepsilon G(t)^{2}\\
 & \geq-C(\varepsilon,\lambda)+\frac{\lambda}{24}G(t)^{2}\mu\left\{ \xi\in\mathbb{R}^{n}:\ln\varGamma(x,1;\xi,1-t)\geq-K\right\} -\frac{\lambda}{12}K^{2}\\
 & \geq-C(\varepsilon,K,\lambda)+\left(\frac{\lambda}{12}\kappa(r)-\varepsilon\right)G(t)^{2}\\
\end{align*}
for $\epsilon>0$ such that $\frac{\lambda}{12}\kappa(r)-\varepsilon>0$.
Now we obtain

\begin{equation}
G'(t)\geq-C_{1}+C_{2}G(t)^{2}\label{eq:df0.1}
\end{equation}
for any $t\in[\frac{1}{2},1]$, where $C_{1}>0$, $C_{2}\in(0,1]$.
The previous inequality (\ref{eq:df0.1}) may be written as 
\[
G'(t)\geq C_{2}\left(G-\sqrt{\frac{C_{1}}{C_{2}}}\right)\left(G+\sqrt{\frac{C_{1}}{C_{2}}}\right)
\]
together with the fact that $G<0$, it follows that 
\begin{equation}
G(1)\geq\min\left\{ -C_{1}-2\sqrt{\frac{C_{1}}{C_{2}}},-\frac{8}{3C_{2}}\right\} =-C_{0}.\label{eq:str0.1}
\end{equation}

The lower bound in (\ref{eq: lowerbound step BMO 1}) follows from
the Chapman-Kolmogrov equation and Jensen's inequality. In fact 
\begin{align*}
\ln\varGamma(x,2;\xi,0) & =\ln\left(\int_{\mathbb{R}^{n}}\varGamma\left(x,2;z,1\right)\varGamma(z,1;\xi,0)dz\right)\\
 & =\ln\left(\int_{\mathbb{R}^{n}}(2\pi)^{n/2}e^{|z|^{2}/2}\varGamma\left(x,2;z,1\right)\varGamma(z,1;\xi,0)\mu(dz)\right)\\
 & \geq\ln\left(\int_{\mathbb{R}^{n}}\varGamma\left(x,2;z,1\right)\varGamma(z,1;\xi,0)\mu(dz)\right)\\
 & \geq\int_{\mathbb{R}^{n}}\ln\varGamma\left(x,2;z,1\right)\mu(dz)+\int_{\mathbb{R}^{n}}\ln\varGamma(z,1;\xi,0)\mu(dz)\\
 & \geq-2C_{0}
\end{align*}
which yields (\ref{eq: lowerbound step BMO 2}). \end{proof}

\emph{Proof of the lower bound (\ref{eq:aron1-2}).} By using scaling
invariant properties, i.e. for any $r>0$ and $z\in\mathbb{R}^{n}$,
\begin{equation}
\varGamma(rx+z,r^{2}t;r\xi+z,0)=r^{-n}\varGamma^{A_{r,z}}(x,t;\xi,0)
\end{equation}
where $A_{r,z}(x,t)=A(rx+z,r^{2}t)$ and $\varGamma^{A}$ is the fundamental
solution associated with $A$. The transformation $A\rightarrow A^{r,z}$
preserves the elliptic constant $\lambda$ and more importantly the
$L^{\infty}(\textrm{BMO})$ norms, so we may apply (\ref{eq: lowerbound step BMO 2})
to $\varGamma^{A_{r,z}}$ to deduce that 
\begin{equation}
\varGamma(x,2t;\xi,0)\geq\frac{e^{-2A}}{t^{n/2}},\quad\vert\xi-x\vert<4t^{\frac{1}{2}}.
\end{equation}
Together with the same chain argument as in D. W. Stroock \cite{Stroock-1988}
by using the Chapman-Kolmogrov equation, we obtain the lower bound
accordingly.

\section{Weak solutions for non-symmetric parabolic equations}

Let us consider the parabolic equation 
\begin{equation}
\sum_{i,j=1}^{n}\frac{\partial}{\partial x^{i}}\left[A^{ij}(x,t)\frac{\partial}{\partial x^{j}}u(x,t)\right]-\frac{\partial}{\partial t}u(x,t)=0,\label{eq:non-01}
\end{equation}
where $A^{ij}=a^{ij}+b^{ij}$, $\left(a^{ij}\right)$ is symmetric
satisfying the uniform elliptic condition that $\lambda\leq(a^{ij})\leq\lambda^{-1}$
in the matrix sense, $\left(b^{ij}\right)$ is skew-symmetric. We
only assume that $A^{ij}$ are Borel measurable in $(x,t)$, and $b^{ij}(x,t)$
belong to the BMO space for every $t\geq0$, such that the BMO norms
$t\rightarrow\left\Vert b(\cdot,t)\right\Vert _{\textrm{BMO}}$ is
bounded, whose supremum norm is denoted by $\left\Vert b\right\Vert _{L^{\infty}(\textrm{BMO})}$,
as before.

\subsection{Weak solutions}

Let us consider Cauchy's initial and Dirichlet boundary problem associated
with (\ref{eq:non-01}). Let $D\subset\mathbb{R}^{n}$ be an open
subset with a smooth boundary. Given $\tau>0$, $u(x,t)$, which is
a locally integrable and Borel measurable function in $(x,t)\in D\times[\tau,\infty)$,
is a weak solution to the Dirichlet boundary problem of (\ref{eq:non-01})
with initial data $u(\cdot,\tau)=f$, if it holds that 
\begin{equation}
-\int_{\tau}^{\infty}\int_{D}\nabla\varphi(x,t)\cdot A(x,t)\nabla u(x,t)dxdt+\int_{\tau}^{\infty}\int_{D}u(x,s)\frac{\partial}{\partial s}\varphi(x,s)dsdx+\int_{D}f(x)\varphi(x,\tau)dx=0\label{eq:weak01}
\end{equation}
for any smooth function $\varphi(x,s)$ which has a compact support
in $D\times[\tau,\infty)$. To ensure that (\ref{eq:weak01}) is well
defined, we need to assume that $u\in L^{2}\left(\left[\tau,T\right];H^{1}(D)\right)$
and $u\in L^{\infty}\left(\left[\tau,T\right];L^{2}(D)\right)$, and
the initial data $f$ is locally integrable. Let $\varGamma^{D}(x,t;\xi,\tau)$
denote the corresponding fundamental solution.

\begin{lemma}\label{lemma-01}Suppose in addition that $A^{ij}$
are smooth, so that the fundamental solution $\varGamma(x,t;\xi,\tau)$
exists, is smooth, and satisfies Aronson estimate, and therefore 
\[
0<\varGamma^{D}(x,t;\xi,\tau)\leq\varGamma(x,t;\xi,\tau)\leq\frac{C}{(t-\tau)^{n/2}}\exp\left(-\frac{|x-\xi|^{2}}{C(t-\tau)}\right)
\]
for all $t>\tau$ and $x,\xi\in D$. If $f\in L^{2}(D)$, then $u(x,t)=\varGamma_{\tau,t}^{D}f(x)$
belongs to 
\[
C([\tau,\infty),L^{2}\left(D\right))\cap L^{\infty}([\tau,\infty),L^{2}\left(D\right))\cap L^{2}([\tau,\infty),H^{1}\left(D\right)).
\]
Moreover, we have the energy inequality
\begin{equation}
||u(\cdot,t)||_{2}^{2}+2\lambda\int_{\tau}^{t}\left\Vert \nabla u(\cdot,s)\right\Vert _{2}^{2}\leq||f||_{2}^{2}\label{eq:engg1}
\end{equation}
for all $t\geq\tau$, and $u(x,t)$ is also a weak solution to (\ref{eq:weak01}).\end{lemma}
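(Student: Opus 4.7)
The plan is to prove the lemma in two stages: first establish everything for smooth compactly supported initial data (where classical parabolic theory applies and the energy identity is elementary), and then extend to general $f \in L^2(D)$ by a density argument driven by the energy inequality itself.

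For $f \in C_c^\infty(D)$, the smoothness hypothesis on $A^{ij}$ together with the standard theory cited earlier (Friedman, Chapter 1) gives a classical smooth solution $u = \varGamma_{\tau,t}^D f$ vanishing on $\partial D$, with $u(\cdot,t) \to f$ in $L^2(D)$ as $t \downarrow \tau$. Multiplying (\ref{eq:non-01}) by $u$ and integrating by parts over $D$ (the boundary terms vanish by the Dirichlet condition) gives
\begin{equation*}
\frac{1}{2}\frac{d}{dt}\|u(\cdot,t)\|_2^2 = -\int_D \langle \nabla u, A(\cdot,t)\nabla u\rangle\,dx = -\int_D \langle \nabla u, a(\cdot,t)\nabla u\rangle\,dx,
\end{equation*}
where the decisive point is that the skew-symmetric part $b$ contributes nothing, since $\langle \xi, b(x,t)\xi\rangle = 0$ for every $\xi \in \mathbb{R}^n$. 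Uniform ellipticity then yields $\frac{d}{dt}\|u\|_2^2 \leq -2\lambda\|\nabla u\|_2^2$, and integration over $[\tau,t]$ produces (\ref{eq:engg1}) in this restricted setting. Continuity in $t$ with values in $L^2(D)$ is immediate from smoothness for $t > \tau$ and from the known convergence at $\tau$.

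For general $f \in L^2(D)$, I would approximate by $f_k \in C_c^\infty(D)$ with $f_k \to f$ in $L^2(D)$ and set $u_k = \varGamma_{\tau,t}^D f_k$. Applying the energy inequality of Stage 1 to the difference gives
\begin{equation*}
\sup_{t \geq \tau}\|u_k(\cdot,t) - u_l(\cdot,t)\|_2^2 + 2\lambda\int_\tau^\infty \|\nabla(u_k - u_l)(\cdot,s)\|_2^2\,ds \leq \|f_k - f_l\|_2^2,
\end{equation*}
so $\{u_k\}$ is Cauchy in $C([\tau,\infty); L^2(D)) \cap L^2([\tau,\infty); H^1(D))$, and its limit $\tilde u$ automatically lives in the three spaces listed in the statement and inherits (\ref{eq:engg1}). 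The Aronson upper bound furnishes the dominated-convergence argument identifying $\tilde u$ with $u(x,t) = \varGamma_{\tau,t}^D f(x)$ pointwise for every $t > \tau$. Finally, each $u_k$ classically satisfies (\ref{eq:weak01}) with $f_k$ in place of $f$; each of the three terms on the left is linear and continuous in $(u_k,\nabla u_k,f_k)$ relative to the norms just established, so passing $k \to \infty$ proves that $u$ is a weak solution.

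The main obstacle is really the matching between the norm in which the energy inequality gives Cauchy convergence and the norm needed for the conclusion: this is exactly the pleasant feature of the inequality, which simultaneously controls $\|u\|_{L^\infty_t L^2_x}$ and $\|\nabla u\|_{L^2_t L^2_x}$, the two pieces required to pass to the limit in the weak formulation. A secondary delicate point is the identification of $\tilde u$ with $\varGamma_{\tau,t}^D f$; the Gaussian upper bound on $\varGamma^D$ reduces this to dominated convergence applied to the kernel representation at each fixed $(x,t)$.
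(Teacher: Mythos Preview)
Your proposal is correct and follows essentially the same route as the paper: first obtain the energy identity for smooth compactly supported data by multiplying the equation by $u$ and integrating by parts (using that the skew-symmetric part of $A$ drops out), then extend to $f\in L^{2}(D)$ by density, with the energy inequality itself supplying the required uniform bounds. The paper compresses the density step into a single sentence, whereas you spell out the Cauchy argument and the identification of the limit via the Aronson bound; this extra care is welcome but not a different idea.
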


\begin{proof}This is a well known result in the theory of parabolic
equations, and its proof is easy. Suppose $f$ is smooth with compact
support in $D$, then $u(x,t)=\varGamma_{\tau,t}^{D}f(x)$ is a classical
solution to the parabolic equation (\ref{eq:non-01}), so that 
\[
\sum_{i,j=1}^{n}\frac{\partial}{\partial x^{i}}\left(A^{ij}(x,t)\frac{\partial}{\partial x^{j}}u(x,t)\right)-\frac{\partial}{\partial t}u(x,t)=0
\]
for all $x\in D$ and $t\geq\tau$. It follows that 
\[
-\int_{D}\sum_{i,j=1}^{n}A^{ij}(x,t)\frac{\partial}{\partial x^{i}}u(x,t)\frac{\partial}{\partial x^{j}}u(x,t)dx-\frac{1}{2}\frac{\partial}{\partial t}\int_{D}u(x,t)^{2}dx=0
\]
for all $t>\tau$, and therefore we have the energy inequality 
\begin{equation}
||u(\cdot,t)||_{2}^{2}+2\lambda\int_{\tau}^{t}\left\Vert \nabla u(\cdot,s)\right\Vert _{2}^{2}ds\leq||f||_{2}^{2}\label{eq:eng04}
\end{equation}
for all $t>\tau$. From the energy inequality above, we deduce that
for every $f\in L^{2}(D)$, $u(x,t)=\varGamma_{\tau,t}^{D}f(x)$ belongs
to $L^{\infty}([\tau,\infty),L^{2}\left(D\right))$ and also to $L^{2}([\tau,\infty),H^{1}\left(D\right))$,
and the energy inequality remains true. Therefore for any $\varphi(x,t)$
which is smooth with compact support in $D\times[\tau,\infty)$, we
have 
\begin{equation}
\int_{\tau}^{t}\int_{D}u(x,s)\frac{\partial}{\partial s}\varphi(x,s)\;dxds-\int_{\tau}^{t}\int_{D}\nabla\varphi(x,s)\cdot A(x,s)\nabla u(x,s)dxds+\int_{D}f(x)\varphi(x,\tau)dx=0,\label{eq:w1}
\end{equation}
which is true for smooth $f$ with compact support, so that it remains
true for $f\in L^{2}(D)$ by the energy inequality above. That is
$u(x,t)=\varGamma_{\tau,t}^{D}f(x)$ is a weak solution with initial
data $f\in L^{2}(D)$.\end{proof}

Next we prove a uniqueness theorem for weak solutions. To this end
we need the following fact.

\begin{lemma}\label{lemma-w1}Let $T>\tau\geq0$. Then 
\begin{equation}
\left\Vert u(\cdot,T)\right\Vert _{2}^{2}-\left\Vert u(\cdot,0)\right\Vert _{2}^{2}=2\left\langle \frac{\partial}{\partial t}u,u\right\rangle _{L^{2}\left([\tau,T],H^{-1}\left(\mathbb{R}^{n}\right)\right),L^{2}\left([\tau,T],H^{1}\left(\mathbb{R}^{n}\right)\right)}\label{eq:w-01}
\end{equation}
for any $u\in L^{2}\left([\tau,T],H^{1}\left(\mathbb{R}^{n}\right)\right)$,
such that $\frac{\partial}{\partial t}u\in L^{2}\left([\tau,T],H^{-1}\left(\mathbb{R}^{n}\right)\right)$
and $u\in C\left(\left[\tau,T\right],L^{2}\left(\mathbb{R}^{n}\right)\right)$,
where $\left\langle \cdot,\cdot\right\rangle _{W^{\star},W}$ denotes
the pairing between a Banach space $W$ and its dual Banach space
$W^{*}$.\end{lemma}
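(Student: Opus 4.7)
The plan is to prove this by reducing to the case of smooth functions via a density argument, since the identity is a standard Lions--Magenes type result on integration by parts in Bochner spaces. The appropriate working space is
\[
W(\tau,T)=\left\{u\in L^{2}([\tau,T],H^{1}(\mathbb{R}^{n})):\partial_{t}u\in L^{2}([\tau,T],H^{-1}(\mathbb{R}^{n}))\right\}
\]
equipped with the norm $\Vert u\Vert_{W}^{2}=\Vert u\Vert_{L^{2}([\tau,T],H^{1})}^{2}+\Vert\partial_{t}u\Vert_{L^{2}([\tau,T],H^{-1})}^{2}$. It is classical that $W(\tau,T)$ embeds continuously into $C([\tau,T],L^{2}(\mathbb{R}^{n}))$ (so the boundary values $u(\cdot,\tau)$ and $u(\cdot,T)$ make sense as elements of $L^{2}(\mathbb{R}^{n})$), and the hypothesis of the lemma is precisely $u\in W(\tau,T)$.

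First I would verify the identity for regular functions, namely for $u\in C^{1}([\tau,T],H^{1}(\mathbb{R}^{n}))$. In that case $t\mapsto\Vert u(\cdot,t)\Vert_{2}^{2}$ is continuously differentiable with
\[
\frac{d}{dt}\Vert u(\cdot,t)\Vert_{2}^{2}=2\int_{\mathbb{R}^{n}}u(x,t)\,\partial_{t}u(x,t)\,dx=2\left\langle \partial_{t}u(\cdot,t),u(\cdot,t)\right\rangle _{H^{-1},H^{1}},
\]
where for smooth $u$ the pairing coincides with the $L^{2}$ inner product. Integrating from $\tau$ to $T$ yields \eqref{eq:w-01} in the smooth case.

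For the general case I would approximate $u\in W(\tau,T)$ by smooth functions. The standard device is to extend $u$ to a slightly larger time interval (by reflection across the endpoints) and then mollify in time: set $u_{\varepsilon}(\cdot,t)=(\rho_{\varepsilon}\ast_{t}u)(\cdot,t)$ where $\rho_{\varepsilon}$ is a standard smooth mollifier on $\mathbb{R}$. Then $u_{\varepsilon}$ is smooth in $t$ with values in $H^{1}(\mathbb{R}^{n})$, and one has $u_{\varepsilon}\to u$ in $L^{2}([\tau,T],H^{1})$ and $\partial_{t}u_{\varepsilon}\to\partial_{t}u$ in $L^{2}([\tau,T],H^{-1})$. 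Combined with the embedding $W(\tau,T)\hookrightarrow C([\tau,T],L^{2})$, this gives $u_{\varepsilon}(\cdot,\tau)\to u(\cdot,\tau)$ and $u_{\varepsilon}(\cdot,T)\to u(\cdot,T)$ in $L^{2}(\mathbb{R}^{n})$. Applying the smooth-case identity to $u_{\varepsilon}$ and passing to the limit on both sides yields \eqref{eq:w-01}.

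The main subtlety is establishing the embedding $W(\tau,T)\hookrightarrow C([\tau,T],L^{2}(\mathbb{R}^{n}))$ together with the continuity of the mollification approximations in the $W$-norm up to the boundary. Once these standard facts are in hand, the convergence of the pairing $\langle\partial_{t}u_{\varepsilon},u_{\varepsilon}\rangle$ to $\langle\partial_{t}u,u\rangle$ in $L^{2}([\tau,T],H^{-1})\times L^{2}([\tau,T],H^{1})$ is automatic from the continuity of the duality pairing, and the boundary terms converge in $L^{2}(\mathbb{R}^{n})$; no further argument is required.
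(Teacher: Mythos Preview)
Your proposal is correct and follows essentially the same route as the paper: verify the identity on a dense class of regular functions and then pass to the limit. The paper uses finite sums of tensor products $\sum_i\varphi_i(x)\eta_i(t)$ with $\varphi_i\in H^1(\mathbb{R}^n)$ as its dense class, whereas you use time-mollification after reflection to produce $C^1([\tau,T],H^1)$ approximants; your version is in fact more careful about the boundary behavior at $t=\tau,T$ than the paper's somewhat terse appeal to ``the density property.''
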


\begin{proof}If $u(x,t)=\sum\varphi_{i}(x)\eta_{i}(t)$ where $\varphi_{i}\in H^{1}\left(\mathbb{R}^{n}\right)$
and $\eta_{i}$ are smooth with compact support in $[\tau,T]$, then
\[
\int_{\mathbb{R}^{n}}u(x,t)^{2}dx=\sum_{i,j}\eta_{i}(t)\eta_{j}(t)\int_{\mathbb{R}^{n}}\varphi_{i}(x)\varphi_{j}(x)dx
\]
so that 
\[
\frac{d}{dt}\int_{\mathbb{R}^{n}}u(x,t)^{2}dx=2\sum_{i,j}\eta'_{i}(t)\eta_{j}(t)\int_{\mathbb{R}^{n}}\varphi_{i}(x)\varphi_{j}(x)dx,
\]
and therefore 
\begin{align*}
\int_{\tau}^{T}\frac{d}{dt}\int_{\mathbb{R}^{n}}u(x,t)^{2}dxdt & =2\int_{\tau}^{T}\sum_{i,j}\eta'_{i}(t)\eta_{j}(t)\int_{\mathbb{R}^{n}}\varphi_{i}(x)\varphi_{j}(x)dxdt\\
 & =2\left\langle \frac{\partial}{\partial t}u,u\right\rangle _{L^{2}\left([\tau,T],H^{-1}\left(\mathbb{R}^{n}\right)\right),L^{2}\left([\tau,T],H^{1}\left(\mathbb{R}^{n}\right)\right)}.
\end{align*}
Hence 
\[
\left\Vert u(\cdot,T)\right\Vert _{2}^{2}-\left\Vert u(\cdot,\tau)\right\Vert _{2}^{2}=2\left\langle \frac{\partial}{\partial t}u,u\right\rangle _{L^{2}\left([\tau,T],H^{-1}\left(\mathbb{R}^{n}\right)\right),L^{2}\left([\tau,T],H^{1}\left(\mathbb{R}^{n}\right)\right)}.
\]
The above remains true for any $u\in L^{2}\left([\tau,T),H^{1}\left(\mathbb{R}^{n}\right)\right)$,
such that $\frac{\partial}{\partial t}u\in L^{2}\left([\tau,T),H^{-1}\left(\mathbb{R}^{n}\right)\right)$
and $u\in C\left(\left[\tau,T\right],L^{2}\left(\mathbb{R}^{n}\right)\right)$,
by the density property.\end{proof}

We are now in a position to show the following uniqueness theorem
for weak solutions.

\begin{theorem}\label{pro: solution regularity in time} Suppose
$A=a+b$ satisfies conditions stated at the beginning of the section,
i.e. $\lambda\leq\left(a^{ij}(x,t)\right)\leq\lambda^{-1}$ and $\left\Vert b\right\Vert _{L^{\infty}\left(\textrm{BMO}\right)}<\infty$.
Let $\tau\geq0$. Suppose $u(x,t)\in C([\tau,\infty),L^{2}(\mathbb{R}^{n}))\cap L^{2}([\tau,\infty),H^{1}(\mathbb{R}^{n}))$,
and satisfies 
\begin{equation}
\int_{\tau}^{\infty}\int_{\mathbb{R}^{n}}u(x,t)\frac{\partial}{\partial t}\varphi(x,t)\;dxdt=\int_{\tau}^{\infty}\int_{\mathbb{R}^{n}}\nabla\varphi(x,t)\cdot A(x,t)\nabla u(x,t)\;dxdt\label{eq: weak solution}
\end{equation}
for any $\varphi(x,t)$ is smooth with compact support in $\mathbb{R}^{n}\times\left(\tau,\infty\right)$,
then 
\begin{equation}
\frac{\partial u}{\partial t}\in L^{2}\left([\tau,\infty),H^{-1}(\mathbb{R}^{n})\right).\label{eq:weak67}
\end{equation}
Hence the following energy inequality holds: 
\begin{equation}
\left\Vert u(\cdot,T)\right\Vert _{2}^{2}+2\lambda\int_{\tau}^{T}\int_{\mathbb{R}^{n}}|\nabla u(x,t)|^{2}\;dxdt\leq\left\Vert u(\cdot,\tau)\right\Vert _{2}^{2}\label{eq:ener-01}
\end{equation}
for every $T>\tau$, and the uniqueness of weak solutions holds for
the initial problem of (\ref{eq:non-01}) in space $C([\tau,\infty),L^{2}(\mathbb{R}^{n}))\cap L^{2}([\tau,\infty),H^{1}(\mathbb{R}^{n})).$\end{theorem}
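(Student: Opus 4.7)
The plan is to extract \eqref{eq:weak67} directly from the weak formulation \eqref{eq: weak solution} by reading off $\partial_{t}u$ as the distributional divergence of $A\nabla u$ and bounding the two pieces of $A=a+b$ separately---the symmetric piece by boundedness of $a$, the skew-symmetric piece by the compensated compactness inequality \eqref{eq:c-c-e}. Once \eqref{eq:weak67} is established, Lemma \ref{lemma-w1} supplies an integration-by-parts in time, the skew-symmetry of $b$ kills the non-symmetric contribution, and the ellipticity of $a$ produces the energy inequality \eqref{eq:ener-01}. Uniqueness follows at once from \eqref{eq:ener-01} applied to the difference of two weak solutions.

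For the regularity claim, I would test \eqref{eq: weak solution} against separated test functions $\varphi(x,t)=\chi(t)\psi(x)$ with $\chi\in C_{c}^{\infty}(\tau,\infty)$ and $\psi\in C_{c}^{\infty}(\mathbb{R}^{n})$ to identify, for a.e.\ $t$,
\[
\langle\partial_{t}u(\cdot,t),\psi\rangle=-\int_{\mathbb{R}^{n}}\nabla\psi\cdot A(\cdot,t)\nabla u(\cdot,t)\,dx.
\]
The ellipticity bound $|a^{ij}|\leq\lambda^{-1}$ dominates the symmetric part by $\lambda^{-1}\|\nabla\psi\|_{2}\|\nabla u(\cdot,t)\|_{2}$, while \eqref{eq:c-c-e} gives
\[
\left|\int_{\mathbb{R}^{n}}\nabla\psi\cdot b(\cdot,t)\nabla u(\cdot,t)\,dx\right|\leq C\|b(\cdot,t)\|_{\textrm{BMO}}\|\nabla\psi\|_{2}\|\nabla u(\cdot,t)\|_{2}
\]
for the skew-symmetric part. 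Combining yields $\|\partial_{t}u(\cdot,t)\|_{H^{-1}}\leq C_{\lambda,b}\|\nabla u(\cdot,t)\|_{2}$, and squaring and integrating in $t\in[\tau,T]$ against $u\in L^{2}([\tau,T],H^{1})$ proves \eqref{eq:weak67}.

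With this in hand, Lemma \ref{lemma-w1} on $[\tau,T]$ gives
\[
\|u(\cdot,T)\|_{2}^{2}-\|u(\cdot,\tau)\|_{2}^{2}=2\langle\partial_{t}u,u\rangle_{L^{2}([\tau,T],H^{-1}),L^{2}([\tau,T],H^{1})},
\]
and a density argument (in the test-function slot of \eqref{eq: weak solution}) identifies the pairing with
\[
-\int_{\tau}^{T}\int_{\mathbb{R}^{n}}\nabla u\cdot a\,\nabla u\,dxdt-\int_{\tau}^{T}\int_{\mathbb{R}^{n}}\nabla u\cdot b\,\nabla u\,dxdt.
\]
Ellipticity bounds the symmetric piece from below by $\lambda\|\nabla u\|_{L^{2}([\tau,T]\times\mathbb{R}^{n})}^{2}$; once the $b$-integral is shown to vanish, \eqref{eq:ener-01} follows, and then $u_{1}\equiv u_{2}$ for any two weak solutions sharing the same initial datum, since their difference still lies in the hypothesized class and has zero initial value.

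The main obstacle is therefore justifying $\int_{\tau}^{T}\int_{\mathbb{R}^{n}}\nabla u\cdot b\nabla u\,dxdt=0$. Formally this is the pairing of the symmetric tensor $\partial_{i}u\,\partial_{j}u$ against the skew-symmetric $b^{ij}$, so pointwise zero; the difficulty is that the pointwise product $b^{ij}\partial_{i}u\,\partial_{j}u$ is not obviously integrable when $b$ lies only in BMO. I would handle this by regularisation: Proposition \ref{prop-bmo} produces smooth $b_{\varepsilon}$ with $\|b_{\varepsilon}\|_{L^{\infty}(\textrm{BMO})}\leq\|b\|_{L^{\infty}(\textrm{BMO})}$ and $b_{\varepsilon}\to b$ in $L_{\mathrm{loc}}^{p}$. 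For $\phi,\psi\in C_{c}^{\infty}(\mathbb{R}^{n})$ the classical bilinear form $B_{\varepsilon}(\phi,\psi)=\int\nabla\phi\cdot b_{\varepsilon}\nabla\psi$ is skew-symmetric in $(\phi,\psi)$ by the pointwise relabelling $i\leftrightarrow j$, and H\"older together with the $L_{\mathrm{loc}}^{p}$ convergence passes to the limit to yield the same skew-symmetry for $B(\phi,\psi)=\int\nabla\phi\cdot b\nabla\psi$ on $C_{c}^{\infty}\times C_{c}^{\infty}$. The uniform bound \eqref{eq:c-c-e} then extends $B$ by density to a skew-symmetric bounded bilinear form on $H^{1}\times H^{1}$, so $B(u(\cdot,t),u(\cdot,t))=0$ for a.e.\ $t$, and integration in $t$ yields the desired cancellation.
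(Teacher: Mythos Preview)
Your proposal is correct and follows essentially the same route as the paper: both bound $\partial_t u$ in $L^2([\tau,T],H^{-1})$ by estimating $\psi\mapsto\int\nabla\psi\cdot A\nabla u$ via boundedness of $a$ plus the compensated compactness inequality \eqref{eq:c-c-e} for $b$, then invoke Lemma~\ref{lemma-w1} and a density argument to identify $\langle\partial_t u,u\rangle$ and obtain \eqref{eq:ener-01}. The only place you work harder than necessary is the vanishing of the $b$-contribution---the regularisation via Proposition~\ref{prop-bmo} is superfluous, since for $\phi,\psi\in C_c^\infty$ the integrand $\nabla\phi\cdot b\nabla\psi$ already lies in $L^1$ (as $b\in L^1_{\mathrm{loc}}$) and is pointwise skew in $(\phi,\psi)$; the paper bypasses this entirely by using that $\sum_{i,j}A^{ij}\xi_i\xi_j\geq\lambda|\xi|^2$ holds pointwise (the skew part being identically zero in any quadratic form), so $\int\nabla u\cdot A\nabla u\geq\lambda\int|\nabla u|^2$ without further comment.
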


\begin{proof} Consider the linear functional 
\[
F_{t}(\psi)=\int_{\mathbb{R}^{n}}(A(x,t)\nabla u(x,t))\cdot\nabla\psi(x)\;dx
\]
for $\psi\in H^{1}(\mathbb{R}^{n})$. By using the compensated compactness
inequality(\ref{eq:c-c-e}) we have 
\begin{equation}
F_{t}(\psi)\leq\left(\Vert a\Vert_{L^{\infty}\left(\mathbb{R}^{n}\times[0,\infty)\right)}+C\left\Vert b\right\Vert _{L^{\infty}(\textrm{BMO})}\right)\Vert\nabla u(\cdot,t)\Vert_{L^{2}(\mathbb{R}^{n})}\Vert\nabla\psi\Vert_{L^{2}(\mathbb{R}^{n})}
\end{equation}
for any $\psi\in H^{1}(\mathbb{R}^{n})$. Hence by Riesz' representation
theorem, there exists a unique $w(\cdot,t)\in H^{1}(\mathbb{R}^{n})$
for every $t$ such that 
\begin{equation}
F_{t}(\psi)=\int_{\mathbb{R}^{n}}\left(\nabla w(x,t)\cdot\nabla\psi(x)+w(x,t)\psi(x)\right)\;dx,\label{eq:eq69}
\end{equation}
where 
\[
\left\Vert w(\cdot,t)\right\Vert _{H^{1}(\mathbb{R}^{n})}\leq\left(\Vert a\Vert_{L^{\infty}\left(\mathbb{R}^{n}\times[0,\infty)\right)}+C\left\Vert b\right\Vert _{L^{\infty}(\textrm{BMO})}\right)\Vert\nabla u(\cdot,t)\Vert_{L^{2}(\mathbb{R}^{n})}
\]
which implies that $w\in L^{2}([\tau,\infty),H^{1}(\mathbb{R}^{n}))$.

In terms of $w(x,t)$, \eqref{eq: weak solution} becomes 
\begin{equation}
\int_{\tau}^{T}\int_{\mathbb{R}^{n}}u(x,t)\varphi(x)\eta'(t)\;dxdt=\int_{\tau}^{T}\int_{\mathbb{R}^{n}}\left(\nabla w(x,t)\cdot\nabla\varphi(x)+w(x,t)\varphi(x)\right)\eta(t)\;dxdt\label{eq:eq70}
\end{equation}
for any $\eta\in C_{0}^{\infty}((\tau,T))$ and $\varphi\in C_{0}^{\infty}(\mathbb{R}^{n})$,
which can be written as 
\[
\int_{\tau}^{T}\left\langle u(\cdot,t),\varphi\right\rangle _{L^{2}}\eta'(t)dt=\int_{\tau}^{T}\left\langle w(\cdot,t),\varphi\right\rangle _{H^{1}}\eta(t)dt
\]
and can be extended to any $\varphi\in H^{1}(\mathbb{R}^{n})$. Since
\begin{align*}
\left|\int_{\tau}^{T}\left\langle w(\cdot,t),\varphi\right\rangle _{H^{1}}\eta(t)dt\right| & \leq\int_{\tau}^{T}\left\Vert w(\cdot,t)\right\Vert _{H^{1}}\left\Vert \varphi\right\Vert _{H^{1}}\eta(t)dt\\
 & =\left\Vert \varphi\right\Vert _{H^{1}}\int_{\tau}^{T}\left\Vert w(\cdot,t)\right\Vert _{H^{1}}\eta(t)dt\\
 & \leq\left\Vert \varphi\right\Vert _{H^{1}}\sqrt{\int_{\tau}^{T}\left\Vert w(\cdot,t)\right\Vert _{H^{1}}^{2}dt}\left\Vert \eta\right\Vert _{L^{2}\left([\tau,T)\right)},
\end{align*}
we obtain 
\[
\left|\int_{\tau}^{T}\left\langle u(\cdot,t),\varphi\right\rangle _{L^{2}}\eta'(t)dt\right|\leq\left\Vert \varphi\right\Vert _{H^{1}}\sqrt{\int_{\tau}^{T}\left\Vert w(\cdot,t)\right\Vert _{H^{1}}^{2}dt}\left\Vert \eta\right\Vert _{L^{2}\left([\tau,T)\right)}
\]
which implies that 
\[
\frac{d}{dt}\left\langle u(\cdot,t),\varphi\right\rangle _{L^{2}}\in L^{2}\left([\tau,T]\right)
\]
for every $\varphi\in H^{1}\left(\mathbb{R}^{n}\right)$. Moreover,
according to F. Riesz' representation 
\[
\left\Vert \frac{d}{dt}\left\langle u(\cdot,t),\varphi\right\rangle _{L^{2}}\right\Vert _{L^{2}[\tau,T]}\leq\left\Vert \varphi\right\Vert _{H^{1}}\sqrt{\int_{\tau}^{\infty}\left\Vert w(\cdot,t)\right\Vert _{H^{1}}^{2}dt}
\]
for any $\varphi\in H^{1}\left(\mathbb{R}^{n}\right)$. Therefore,
there is $\frac{\partial}{\partial t}u\in L^{2}\left([\tau,T],H^{-1}\left(\mathbb{R}^{n}\right)\right)$
such that 
\[
\int_{\tau}^{T}\left\langle \frac{\partial}{\partial t}u(\cdot,t),\varphi\right\rangle _{H^{-1},H^{1}}\eta(t)dt=-\int_{\tau}^{T}\left\langle u(\cdot,t),\varphi\right\rangle _{L^{2}}\eta'(t)dt
\]
for every $\varphi\in H^{1}\left(\mathbb{R}^{n}\right)$ and $\eta\in C_{0}^{\infty}\left(\tau,T\right)$.
The above equality can be written as 
\begin{align*}
\left\langle \frac{\partial}{\partial t}u,\varphi\otimes\eta\right\rangle  & =-\int_{\tau}^{T}\int_{\mathbb{R}^{n}}u(x,t)\frac{\partial}{\partial t}\left(\varphi(x)\eta(t)\right)dxdt\\
 & =-\int_{\tau}^{T}\int_{\mathbb{R}^{n}}\nabla(\varphi(x)\eta(t))\cdot A(x,t)\nabla u(x,t)\;dxdt
\end{align*}
where and in the remaining part of the proof, for simplicity, we use
$\left\langle \cdot,\cdot\right\rangle $ to denote the pairing between
$L^{2}\left([\tau,T),H^{1}\left(\mathbb{R}^{n}\right)\right)$ and
its dual space $L^{2}\left([\tau,T],H^{-1}\left(\mathbb{R}^{n}\right)\right)$.
Since 
\[
\textrm{span}\left\{ \varphi\otimes\eta:\varphi\in H^{1}\left(\mathbb{R}^{n}\right)\textrm{ and }\eta\in C_{0}^{\infty}\left(\tau,T\right)\right\} 
\]
is dense in $L^{2}\left([\tau,T],H^{1}\left(\mathbb{R}^{n}\right)\right)$,
we have 
\[
\left\langle \frac{\partial}{\partial t}u,\psi\right\rangle =-\int_{\tau}^{T}\int_{\mathbb{R}^{n}}\nabla\psi(x,t)\cdot A(x,t)\nabla u(x,t)\;dxdt
\]
for any $\psi\in L^{2}\left([\tau,T],H^{1}\left(\mathbb{R}^{n}\right)\right)$.
In particular, 
\begin{align*}
\left\langle \frac{\partial}{\partial t}u,u\right\rangle  & =-\int_{\tau}^{T}\int_{\mathbb{R}^{n}}\nabla u(x,t)\cdot A(x,t)\nabla u(x,t)\;dxdt\\
 & \leq-\lambda\int_{\tau}^{T}\int_{\mathbb{R}^{n}}|\nabla u(x,t)|^{2}dxdt.
\end{align*}
Now, by combining with Lemma \ref{lemma-w1}, we deduce that 
\[
\left\Vert u(\cdot,T)\right\Vert _{2}^{2}-\left\Vert u(\cdot,0)\right\Vert _{2}^{2}=2\left\langle \frac{\partial}{\partial t}u,u\right\rangle \leq-2\lambda\int_{\tau}^{T}\int_{\mathbb{R}^{n}}|\nabla u(x,t)|^{2}dxdt
\]
which in turn yields the energy inequality (\ref{eq:ener-01}). The
other conclusions of the theorem follow easily.\end{proof}

\begin{remark} 1) The same results has been proved by H. Osada \cite{Osada-1987}
when $A^{ij}$ are uniformly bounded. In fact the result is classical
if $A^{ij}$ are bounded (see for example Theorem 6.2. on page 102,
G. M. Lieberman \cite{Libeberman-book--1996}).

2) $u\in L^{2}([\tau,\infty),H^{1}(\mathbb{R}^{n}))$ and $\frac{\partial u}{\partial t}\in L^{2}([\tau,\infty),H^{-1}(\mathbb{R}^{n}))$
actually implies that $u\in C([\tau,\infty),L^{2}(\mathbb{R}^{n}))$
with possibly modification on a measure zero subset of $[\tau,\infty)$.
Therefore we have proved the uniqueness of weak solution $u$ in space
$L^{\infty}([\tau,\infty),L^{2}(\mathbb{R}^{n}))\cap L^{2}([\tau,\infty),H^{1}(\mathbb{R}^{n}))$.
\end{remark}

We are now in a position to state and to prove the following theorem.

\begin{theorem}\label{the: wellposeness theorem} Suppose $\left(A^{ij}\right)=\left(a^{ij}\right)+\left(b^{ij}\right)$,
where $a$ and $b$ are symmetric and skew-symmetric parts of $A$
respectively, is uniformly elliptic: $\lambda\leq a(x,t)\leq\lambda^{-1}$
in matrix sense for some constant $\lambda>0$, and $\left\Vert b\right\Vert _{L^{\infty}(\textrm{BMO})}<\infty$.
Then there is a unique positive function $\varGamma(x,t;\xi,\tau)$
defined for $t>\tau\geq0$ and $x,\xi\in\mathbb{R}^{n}$, which possesses
the following properties.

1) $\varGamma$ is a Markov transition density: $\varGamma(x,t;\xi,\tau)>0$,
\[
\int_{\mathbb{R}^{n}}\varGamma(x,t;\xi,\tau)d\xi=1\quad\textrm{and }\quad\int_{\mathbb{R}^{n}}\varGamma(x,t;\xi,\tau)dx=1
\]
for any $t>\tau\geq0$, and 
\[
\varGamma(x,t;\xi,\tau)=\int_{\mathbb{R}^{n}}\varGamma(x,t;z,s)\varGamma(z,s;\xi,\tau)dz
\]
for any $t>s>\tau\geq0$.

2) There is a constant $M>0$ depending only on $n$, $\lambda$ and
$\left\Vert b\right\Vert _{L^{\infty}(\textrm{BMO})}$ such that 
\[
\frac{1}{M(t-\tau)^{n/2}}\exp\left(-\frac{M|x-\xi|^{2}}{t}\right)\leq\varGamma(x,t;\xi,\tau)\leq\frac{M}{(t-\tau)^{n/2}}\exp\left(-\frac{|x-\xi|^{2}}{Mt}\right)
\]
for all $t>\tau$.

3) For every $f\in L^{2}\left(\mathbb{R}^{n}\right)$, $u(x,t)=\int_{\mathbb{R}^{n}}f(\xi)\varGamma(x,t;\xi,\tau)d\xi$
(for any $t\geq\tau$) is the unique weak solution with initial data
$f$, which belongs to 
\[
C([\tau,\infty),L^{2}(\mathbb{R}^{n}))\cap L^{\infty}([\tau,\infty),L^{2}(\mathbb{R}^{n}))\cap L^{2}([\tau,\infty),H^{1}(\mathbb{R}^{n})).
\]
 \end{theorem}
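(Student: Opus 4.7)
\textbf{Proof proposal for Theorem~\ref{the: wellposeness theorem}.}

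The plan is to construct $\varGamma$ by smooth approximation, read off the Markov and Gaussian properties from uniform estimates on the approximations, then obtain the weak solution representation and uniqueness by invoking Lemma~\ref{lemma-01} and Theorem~\ref{pro: solution regularity in time}. The point is that every quantitative estimate in Sections~2--4 is stable under mollification of the coefficients.

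First I would regularize. By subtracting the mean on $B(0,1)$ from each $b^{ij}(\cdot,t)$ (which does not change the divergence-form operator) I may assume \eqref{eq: a nice version}, and then Proposition~\ref{prop-bmo} produces smooth space-time mollifications $b_\epsilon$ with $\Vert b_\epsilon\Vert_{L^\infty(\textrm{BMO})} \le \Vert b\Vert_{L^\infty(\textrm{BMO})}$. Standard convolution of $a$ yields smooth $a_\epsilon$ preserving the bounds $\lambda\le a_\epsilon\le\lambda^{-1}$. Setting $A_\epsilon=a_\epsilon+b_\epsilon$, classical theory \cite{Friedman-1964} gives a smooth positive fundamental solution $\varGamma_\epsilon$, and Theorem~\ref{-main-th1} applies to $\varGamma_\epsilon$ with the \emph{same} constant $M=M(n,\lambda,\Vert b\Vert_{L^\infty(\textrm{BMO})})$ for every $\epsilon$.

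Next I pass to the limit. Theorem~\ref{cor: continuity of transition probability}, also uniform in $\epsilon$, makes $\{\varGamma_\epsilon\}$ equicontinuous on compact subsets of $\{0\le\tau<t\}\times\mathbb{R}^n\times\mathbb{R}^n$, so Arzel\`a--Ascoli and a diagonal argument extract a subsequence converging locally uniformly to some continuous $\varGamma$ which inherits the Aronson bounds in part~(2). Dominated convergence (majorant: the Gaussian upper bound) propagates the Chapman--Kolmogorov identity \eqref{eq:CK-01} and the two normalisations $\int\varGamma_\epsilon\,d\xi=\int\varGamma_\epsilon\,dx=1$ to the limit, giving part~(1). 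For part~(3), fix $f\in L^2(\mathbb{R}^n)$ and set $u_\epsilon(x,t)=\int f(\xi)\varGamma_\epsilon(x,t;\xi,\tau)\,d\xi$. Lemma~\ref{lemma-01} (applied on $D=\mathbb{R}^n$, or via exhaustion by balls) supplies the weak formulation \eqref{eq:weak01} and the energy inequality
\[
\Vert u_\epsilon(\cdot,t)\Vert_2^2 + 2\lambda\int_\tau^t \Vert\nabla u_\epsilon(\cdot,s)\Vert_2^2\,ds \le \Vert f\Vert_2^2,
\]
so a further subsequence has $u_\epsilon\rightharpoonup u$ in $L^2([\tau,T],H^1)$ and $u_\epsilon\stackrel{*}{\rightharpoonup}u$ in $L^\infty([\tau,T],L^2)$; the pointwise Gaussian bound identifies this limit with $\int f(\xi)\varGamma(x,t;\xi,\tau)d\xi$. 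The compensated compactness estimate \eqref{eq:c-c-e} applied to $b_\epsilon$ gives a uniform $L^2([\tau,T],H^{-1})$ bound on $\partial_t u_\epsilon$, so by Aubin--Lions $u_\epsilon\to u$ strongly in $L^2([\tau,T],L^2_{\mathrm{loc}})$ and, along a subsequence, a.e.

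The main obstacle is passing to the limit in the skew-symmetric pairing $\int_\tau^T\int \nabla\varphi\cdot b_\epsilon\nabla u_\epsilon\,dx\,dt$ that appears in \eqref{eq:weak01}, because $b_\epsilon\to b$ only in $L^p_{\mathrm{loc}}$ (Proposition~\ref{prop-bmo}) while $\nabla u_\epsilon\rightharpoonup\nabla u$ only weakly in $L^2$. The fix is to restrict to test functions $\varphi\in C_0^\infty(\mathbb{R}^n\times(\tau,\infty))$, write the term as
\[
\int_\tau^T\int (b_\epsilon-b)\nabla\varphi\cdot\nabla u_\epsilon\,dx\,dt \;+\; \int_\tau^T\int b\,\nabla\varphi\cdot\nabla u_\epsilon\,dx\,dt,
\]
bound the first piece by $\Vert b_\epsilon-b\Vert_{L^p(\mathrm{supp}\,\varphi)}\Vert\nabla\varphi\Vert_{L^{p'}}\Vert\nabla u_\epsilon\Vert_{L^2}^{1/?}$ (via H\"older with $p$ large) which vanishes, and pass the second to the limit using the weak convergence of $\nabla u_\epsilon$ since $b\nabla\varphi\in L^2$ on the compact support of $\varphi$ by the $\textrm{BMO}\subset L^p_{\mathrm{loc}}$ estimate derived after Proposition~\ref{prop-bmo}. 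Together with the strong $L^2_{\mathrm{loc}}$ convergence of $u_\epsilon$ handling the $\partial_t\varphi$ term, this shows $u$ satisfies \eqref{eq: weak solution}. Continuity $u\in C([\tau,\infty),L^2)$ and the regularity $u\in L^2([\tau,\infty),H^1)\cap L^\infty([\tau,\infty),L^2)$ are immediate from the energy inequality passed to the limit, after which Theorem~\ref{pro: solution regularity in time} delivers the uniqueness of $u$, which also forces uniqueness of $\varGamma$ (two candidate fundamental solutions produce the same weak $L^2$ semigroup, hence coincide pointwise by continuity and the $L^2$ density of $C_0^\infty$).
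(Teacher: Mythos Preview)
Your overall strategy---regularize the coefficients, invoke the uniform Aronson and H\"older estimates from Sections~2--4, extract a limit by Arzel\`a--Ascoli, and pass to the limit in the weak formulation using the energy inequality and the uniqueness from Theorem~\ref{pro: solution regularity in time}---is exactly what the paper does. Your discussion of the limit in the pairing $\int\nabla\varphi\cdot b_\epsilon\nabla u_\epsilon$ is in fact more explicit than the paper's one-line ``letting $m\to\infty$''.

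There is, however, one genuine technical gap in your regularization step. Mollifying $b$ via Proposition~\ref{prop-bmo} gives smooth $b_\epsilon$ with $\Vert b_\epsilon\Vert_{L^\infty(\textrm{BMO})}\le\Vert b\Vert_{L^\infty(\textrm{BMO})}$, but these $b_\epsilon$ are \emph{not bounded} in general (take $b(x)=\log|x|$). The classical existence theory you cite from \cite{Friedman-1964}, and the smooth setting under which Theorem~\ref{-main-th1} and Theorem~\ref{cor: continuity of transition probability} were proved, require $A^{ij}$ bounded with bounded derivatives; without that you cannot produce $\varGamma_\epsilon$ in the first place. The paper fixes this by inserting a truncation \emph{before} mollifying: it builds compactly supported BMO functions
\[
U^{(m)}(x)=(-\epsilon\log|x|+m)\wedge m\vee 0,\qquad L^{(m)}(x)=(\epsilon\log|x|-m)\wedge 0\vee(-m),
\]
with $\epsilon$ chosen so that $\Vert\epsilon\log|x|\Vert_{\textrm{BMO}}\le\Vert b\Vert_{L^\infty(\textrm{BMO})}$, sets $b^{(m)}=b\wedge U^{(m)}\vee L^{(m)}$, and uses the lattice property of BMO (the proposition immediately after Proposition~\ref{prop-bmo}) to keep $\Vert b^{(m)}\Vert_{L^\infty(\textrm{BMO})}\le C\Vert b\Vert_{L^\infty(\textrm{BMO})}$. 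This $b^{(m)}$ is bounded and compactly supported, so after mollification the approximate coefficients are smooth, bounded, with bounded derivatives, and still have uniformly controlled BMO norm. Once you add this truncation step, the rest of your argument goes through essentially as written.
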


\begin{proof} Since $b\in L^{\infty}(\textrm{BMO}(\mathbb{R}^{n}))$,
we can choose a $\epsilon>0$ such that 
\[
\Vert\epsilon\log(\vert x\vert)\Vert_{\textrm{BMO}}\leq\Vert b\Vert_{L^{\infty}(\textrm{BMO})}.
\]
Define 
\begin{equation}
U^{(m)}(x)=(-\epsilon\log(\vert x\vert)+m)\wedge m\vee0,\qquad L^{(m)}(x)=(\epsilon\log(\vert x\vert)-m)\wedge0\vee(-m)
\end{equation}
which are compactly supported BMO functions with 
\[
\Vert U^{(m)}\Vert_{\textrm{BMO}}=\Vert L^{(m)}\Vert_{\textrm{BMO}}\leq C\Vert b\Vert_{L^{\infty}(\textrm{BMO})},
\]
where constant $C>0$ depends only on the dimension $n$. Let 
\begin{equation}
b^{(m)}(x,t)=b(x,t)\wedge U^{(m)}(x)\vee L^{(m)}(x).
\end{equation}
By Proposition \ref{prop-bmo}, we can mollify it to define $b_{\frac{1}{m}}^{(m)}$.
Then, there is a $C$ independent of $b$ and $m$, such that $\Vert b_{\frac{1}{m}}^{(m)}\Vert_{L^{\infty}(\textrm{BMO})}\leq C\Vert b\Vert_{L^{\infty}(\textrm{BMO})}$.
Each $b_{\frac{1}{m}}^{(m)}$ is smooth with compact support and $b_{\frac{1}{m}}^{(m)}\rightarrow b$
in $L_{loc}^{p}([0,\infty)\times\mathbb{R}^{n})$ for any $1\leq p<\infty$.
For simplicity denote $b_{\frac{1}{m}}^{(m)}$ by $b_{m}$. Similarly
$a_{m}$ denotes the mollified one of $a$ for $m=1,2,\cdots$. $a_{m}(x,t)$
and $b_{m}(x,t)$ are smooth, bounded and have bounded derivatives
of all orders, and $a_{m}\rightarrow a$ and $b_{m}\rightarrow b$
in $L_{loc}^{p}([0,\infty)\times\mathbb{R}^{n})$ for every $p\in[1,\infty)$.

Now for each $A_{m}(x,t)=a_{m}(x,t)+b_{m}(x,t)$, $a_{m}$ is uniformly
elliptic with elliptic constant $2\lambda$ and 
\[
\Vert b_{m}\Vert_{L^{\infty}(\textrm{BMO})}\leq C\Vert b\Vert_{L^{\infty}(\textrm{BMO})}
\]
for some constant depending only on the dimension $n$, thus there
is a unique fundamental solution $\varGamma^{m}(x,t;\xi,\tau)$ which
satisfies the Aronson estimate with the same constant. According to
Theorem \ref{cor: continuity of transition probability}, $\varGamma^{m}(x,t;\xi,\tau)$
are Hölder continuous in any compact sub-set of $t>\tau\geq0$ and
$x,\xi\in\mathbb{R}^{n}$ with the same Hölder exponent and the same
Hölder constant for all $m=1,2,\cdots$. Therefore by the Arzela-Ascoli
Theorem, there is a sub-sequence of $\varGamma^{m}$, for simplicity
the sub-sequence is still denoted by $\varGamma^{m}$, which converges
locally uniformly to some $\varGamma(x,t;\xi,\tau)$ for $t>\tau\geq0$
and $x,\xi\in\mathbb{R}^{n}$. Clearly $\varGamma(x,t;\xi,\tau)$
still satisfies 1) and 2).

We now prove 3). By our construction, if $\tau>0$ and $f\in L^{2}(\mathbb{R}^{n})$,
\[
u^{m}(x,t)=\varGamma_{\tau,t}^{m}f(x)\rightarrow u(x,t)=\varGamma_{\tau,t}f(x)
\]
point-wisely. According to Lemma \ref{lemma-01}, $u^{m}$ (actually
$u^{m}(x,t)$ is Hölder continuous too in $t>\tau$ and $x$) is a
strong solution to the Cauchy problem of the parabolic equation associated
with the diffusion matrix $A_{m}$, so that the energy inequality
holds: 
\begin{equation}
||u^{m}(\cdot,t)||_{2}^{2}+\lambda\int_{\tau}^{t}\left\Vert \nabla u^{m}(\cdot,s)\right\Vert _{2}^{2}\leq||f||_{2}^{2},\label{eq:engg1-1}
\end{equation}
which implies that $\left\{ u^{m}\right\} $ is uniformly bounded
in $L^{2}([\tau,\infty),H^{1}(\mathbb{R}^{n}))$. Hence there is a
sub-sequence converges weakly, whose limit must be $u$, and $u$
also satisfies the energy inequality above. In particular 
\[
u\in C\left([\tau,\infty),L^{2}\left(\mathbb{R}^{n}\right)\right)\cap L^{\infty}([\tau,\infty),L^{2}(\mathbb{R}^{n}))\cap L^{2}([s,\infty),H^{1}(\mathbb{R}^{n})).
\]
For each $m$, we have 
\[
\int_{\tau}^{\infty}\int_{\mathbb{R}^{n}}u^{m}(x,t)\frac{\partial}{\partial t}\varphi(x,t)\;dxdt-\int_{\tau}^{\infty}\int_{\mathbb{R}^{n}}\nabla\varphi(x,t)\cdot A_{m}(x,t)\nabla u^{m}(\tau,x))\;dxdt=0
\]
for any $\varphi\in C_{0}^{\infty}((\tau,\infty)\times\mathbb{R}^{n})$.
Since $A_{m}\rightarrow A$ in $L_{loc}^{p}([s,\infty)\times\mathbb{R}^{n})$
for any $1\leq p<\infty$ and $u_{m}\rightarrow u$ weakly in $L^{2}([\tau,\infty),H^{1}(\mathbb{R}^{n}))$.
By letting $m\rightarrow\infty$ in the equality above, we obtain
that 
\[
\int_{\tau}^{\infty}\int_{\mathbb{R}^{n}}u(x,t)\frac{\partial}{\partial t}\varphi(x,t)\;dxdt-\int_{\tau}^{\infty}\int_{\mathbb{R}^{n}}\nabla\varphi(x,t)\cdot A(x,t)\nabla u(x,t))\;dxdt=0
\]
for any $\varphi\in C_{0}^{\infty}((\tau,\infty)\times\mathbb{R}^{n})$.
That is, $u$ is a weak solution to the Cauchy problem of the parabolic
equation (\ref{eq:non-01}) with initial data $f$. Now according
to Lemma \ref{pro: solution regularity in time}, $\frac{\partial u}{\partial t}\in L^{2}([\tau,\infty),H^{-1}(\mathbb{R}^{n}))$
and therefore, by Theorem \ref{the: wellposeness theorem}, 
\begin{equation}
||u(\cdot,t)||_{2}^{2}+\lambda\int_{\tau}^{t}\left\Vert \nabla u(\cdot,s)\right\Vert _{2}^{2}\leq||f||_{2}^{2}.\label{eq:engg1-1-1}
\end{equation}

The uniqueness of the fundamental solution $\varGamma$ follows from
the energy inequality easily. In fact, suppose there is another sub-sequence
of $\varGamma^{m}$ converges to $\tilde{\varGamma}$. Then $\tilde{u}(x,t)=\tilde{\varGamma_{\tau,t}}f(x)$
satisfies all the results above. Especially they both satisfy the
energy inequality. Therefore $w=u-\tilde{u}$ also satisfies the previous
integral equation, and by Theorem \ref{the: wellposeness theorem},
we deduce that 
\[
\int_{\mathbb{R}^{n}}w(x,t)^{2}\;dx+\lambda\int_{\tau}^{t}\int_{\mathbb{R}^{n}}\vert\nabla w(x,t)\vert^{2}\;dxdt\leq0
\]
for any $t>\tau$ and we have $w=0$. This implies $u=\tilde{u}$
and hence $\varGamma=\tilde{\varGamma}$. The proof is complete. \end{proof}

\end{document}